\definecolor{red}{rgb}{1,0,0}
\definecolor{blue}{rgb}{0,0,1}
\definecolor{green}{rgb}{0,.6,0}
\tikzset{vtx/.style={inner sep=1.7pt, outer sep=0pt, circle,fill,draw}, 
vtx red/.style={inner sep=2.7pt, outer sep=0pt, circle, fill=red,draw}, 
vtx blue/.style={inner sep=2.7pt, outer sep=0pt, circle, fill=blue,draw}, 
vtx green/.style={inner sep=2.7pt, outer sep=0pt, circle, fill=black!30!green,draw}, 
vtx gray/.style={inner sep=2.7pt, outer sep=0pt, circle, fill=gray!30!white,draw}, 
vtx3/.style={inner sep=1.2pt, outer sep=0pt, regular polygon, regular polygon sides=3, draw},
vtx4/.style={inner sep=1.7pt, outer sep=0pt, regular polygon, regular polygon sides=4, draw},
vtx5/.style={inner sep=1.7pt, outer sep=0pt, regular polygon, regular polygon sides=5, draw},
vtx_box/.style={inner sep=2pt, outer sep=0pt, rectangle, draw, fill=white},
blueish/.style={color=blue},
reddish/.style={color=red},
greenish/.style={color=black!30!green},
edge_blueish/.style={color=blue,line width=2pt},
edge_reddish/.style={color=red,line width=2pt,dashed},
edge_greenish/.style={color=black!30!green,line width=2pt, densely dotted},
edge_orange/.style={color=orange,line width=2pt},
directed edge/.style={decoration={markings, mark=at position 0.5 with {\arrow{Latex}}},  postaction={decorate}},
} 
\newtheorem{thm}{Theorem}[section]
\newtheorem{lem}[thm]{Lemma}
\newtheorem{prop}[thm]{Proposition}
\newtheorem{obs}[thm]{Observation}
\theoremstyle{definition}
\theoremstyle{definition}
\newtheorem{defn}[thm]{Definition}
\theoremstyle{definition}
\newcommand{\thperc}{\operatorname{th}_p^{(r)}}
\newcommand{\ptime}{\operatorname{perc}_r(G;B)}
\newcommand{\R}{\mathbb{R}}
\newcommand{\N}{\mathbb{N}}
\newcommand{\Z}{\operatorname{Z}}
\newcommand{\X}{\operatorname{X}}
\newcommand{\bit}{\begin{itemize}}
\newcommand{\eit}{\end{itemize}}
\newcommand{\ben}{\begin{enumerate}}
\newcommand{\een}{\end{enumerate}}
\newcommand{\beq}{\begin{equation}}
\newcommand{\eeq}{\end{equation}}
\newcommand{\bea}{\begin{eqnarray}} 
\newcommand{\eea}{\end{eqnarray}}
\newcommand{\bpf}{\begin{proof}}
\newcommand{\epf}{\end{proof}\ms}
\newcommand{\bmt}{\begin{bmatrix}}
\newcommand{\emt}{\end{bmatrix}}
\newcommand{\ms}{\medskip}
\newcommand{\noi}{\noindent}
\newcommand{\beqs}{\begin{equation*}} 
\newcommand{\eeqs}{\end{equation*}}
\newcommand{\beas}{\begin{eqnarray*}}
\newcommand{\eeas}{\end{eqnarray*}}
\newcommand{\up}[1]{^{(#1)}}
\newcommand{\upc}[1]{^{[#1]}}
\newcommand{\calf}{\mathcal{F}}
\newcommand{\zf}{\operatorname{\lfloor \operatorname{Z} \rfloor}}
\newcommand{\zp}{\operatorname{Z}_{+}}
\newcommand{\pt}{\operatorname{pt}}
\newcommand{\ptx}{\operatorname{pt}_{\X}}
\newcommand{\throt}{\operatorname{th}}
\newcommand{\thx}{\operatorname{th_{\X}}}
\newcommand{\thstar}{\throt^*}
\newcommand{\thwx}{\throt^{\omega}_{\X}}
\newcommand{\fs}{\rightarrow}
\title{Generalizing forbidden induced subgraph characterizations of high throttling numbers}
\author{Joshua Carlson\thanks{Dept.~of Mathematics and Computer Science, Drake University, Des Moines, IA, USA (jc31@williams.edu)} \and 
J\"urgen Kritschgau\thanks{Dept.~of Mathematics, Carnegie Mellon University, Pittsburgh, PA, USA (jkritsch@andrew.cmu.edu)} }
\date{\today}
\begin{document}
\maketitle

\begin{abstract} 
Zero forcing is a process that models the spread of information throughout a graph as white vertices are forced to turn blue using a color change rule. The idea of throttling, introduced in 2013 by Butler and Young, is to optimize the trade-off between the number of initial blue vertices and the time taken to force all vertices to become blue. The original throttling number of a graph minimizes the sum of these two quantities and the product throttling number minimizes their product. In addition, weighted throttling changes the weights given to these two quantities when minimizing their sum. Since its introduction, throttling has expanded to include many variants of zero forcing. This motivates the study of zero forcing and throttling using abstract color change rules. Recently, it has been shown that the graphs with high (sum) throttling numbers are characterized by a finite family of forbidden induced subgraphs. In this paper, we extend that result to throttling, product throttling, and weighted throttling using abstract color change rules. To this end, we define some important families of color change rules and explore their properties.
\end{abstract}

\noi {\bf Keywords} Zero forcing, throttling, forbidden subgraphs, color change rule

\noi{\bf AMS subject classification} 05C57, 05C15, 05C50
\section{Introduction}

Zero forcing is a combinatorial game played on graphs in which a color change rule is used to change the color of vertices from white to blue.  The \emph{standard color change rule}, denoted $\Z$, states that if a blue vertex $v$ has a unique white neighbor $w$, then $v$ can force $w$ to become blue. Starting with an initial subset of vertices $B \subseteq V(G)$ colored blue and $V(G) \setminus B$ colored white, the goal of the game is to repeatedly apply the color change rule and eventually force every vertex in $V(G)$ to become blue. If this goal is achievable using the standard color change rule, then the initial subset $B$ of blue vertices is called a \emph{(standard) zero forcing set} of $G$. The \emph{(standard) zero forcing number} of a graph $G$, denoted $\Z(G)$, is the size of a minimum standard zero forcing set of $G$.

The standard zero forcing number was introduced in \cite{AIM} as an upper bound for the maximum nullity of a family of symmetric matrices that correspond to a given graph. Since its origin, many variants of zero forcing, obtained by altering the standard color change rule, have been studied (see \cite{Parameters}). One variant, called \emph{positive semidefinite (PSD) zero forcing}, allows forcing to occur in multiple components of a graph. Suppose $G$ is a graph and $B \subseteq V(G)$ is the set blue vertices in $V(G)$. Let $W_1, W_2, \ldots, W_k$ be the sets of white vertices in the components of $G-B$ respectively. The \emph{PSD color change rule}, denoted $\zp$, states that if $v$ is a blue vertex and $w$ is the unique white neighbor of $v$ in the graph $G[B \cup W_i]$ for some $1 \leq i \leq k$, then $v$ can force $w$ to become blue. Note that the PSD color change rule is simply the standard color change rule applied within each component of $G-B$. PSD forcing sets and the PSD forcing number, denoted $\zp(G)$, are defined analogously to their standard counterparts.

Due to the numerous variants of zero forcing, attempts have been made to unify these parameters with abstract definitions (see \cite{Parameters, JCThrot, CK20}). All zero forcing parameters stem from a color change rule that specifies the conditions under which a vertex $v$ can force another vertex $w$ to become blue. For a given color change rule $\X$, a valid force can be denoted as $v \overset{\X}{\rightarrow} w$. In a graph $G$, suppose we start with $B \subseteq V(G)$ colored blue and $V(G) \setminus B$ colored white and we apply an arbitrary color change rule $\X$ until no more forces are possible. The set of blue vertices in $G$ that results from this process is called an \emph{$\X$ final coloring} of $B$. An \emph{$\X$ forcing set} of $G$ is a subset of $V(G)$ that has $V(G)$ as an $\X$ final coloring and the \emph{$\X$ forcing number} for $G$, denoted $\X(G)$, is the size of a minimum $\X$ forcing set of $G$. In general, there can be many distinct $\X$ final colorings of a given subset $B \subseteq V(G)$ (see \cite{Parameters, JCThrot}). For $\Z$ and $\zp$, the final coloring of a given subset $B \subseteq V(G)$ is unique and is sometimes called the \emph{closure} of $B$. 

There are various ways to keep track of the forces that occur during a zero forcing process. If $B \subseteq V(G)$ is the initial set of blue vertices, the ordered list of forces performed to obtain an $\X$ final coloring of $B$ is called a \emph{chronological list of $\X$ forces of $B$}. Also, the set of forces that appear in a given chronological list is called a \emph{set of $\X$ forces of $B$}. For a set of $\X$ forces, $\calf$, an \emph{$\X$ forcing chain of $\calf$} is a list of vertices $v_1, v_2, \ldots, v_k \in V(G)$, such that the force $(v_i \overset{\X}{\rightarrow} v_{i+1}) \in \calf$ for each $1 \leq i \leq k-1$. Note that this definition of forcing chain does not require the list of vertices in the chain to be maximal. This diverges from some previous literature, but is necessary for our investigation.

In addition to its connections to linear algebra, zero forcing is also studied for its combinatorial properties. There are a variety of parameters that measure the time taken during a zero forcing process. Suppose $\calf$ is a set of $\X$ forces of a subset $B \subseteq V(G)$. Define $\calf \up{0} = B$ and for each integer $t>0$, define $\calf \up{t}$ as follows. First, color $\bigcup_{i=0}^{t-1} \calf \up{i}$ blue and color $V(G) \setminus \bigcup_{i=0}^{t-1} \calf \up{i}$ white. Given this coloring, $\calf \up{t}$ is the set of white vertices $w$ for which there exists a blue vertex $b$ such that $(b \rightarrow w)$ is a valid $\X$ force in $\calf$.
For each integer $t \geq 0$, let $\calf\upc{t} = \bigcup_{i=0}^t \calf\up{i}$.
If $\calf\up i$ is uniquely determined by $B$ independent of $\calf$, then we write $B\up i=\calf\up i$ and $B\upc{i} = \calf \upc{i}$.
The \emph{$\X$ propagation time} of $\calf$, denoted $\ptx(G; \calf)$, is the smallest nonnegative integer $q$ such that $\calf\upc{q} = V(G)$.  Note that if $B$ is not an $\X$ forcing set of $G$, then $\ptx(G; \calf) = \infty$ for any set of $X$ forces $\calf$ of $B$. Since there are sometimes many distinct sets of  $\X$ forces of a given subset $B \subseteq V(G)$, the \emph{$\X$ propagation time of $B$} is defined as $\ptx(G;B) = \min \{\ptx(G; \calf) \ | \ \calf \text{ is a set of $\X$ forces of $B$}\}.$ 

Informally, $\calf \upc{t}$ is the set of vertices in $V(G)$ that are blue at \emph{time} $t$ and for each $t > 0$, $\calf\up{t}$ is the set of vertices that turn blue during \emph{time step} $t$. In addition, let $U_{\mathcal F}\up{t}$ denote the set of vertices that perform the forces during each positive time step $t$ and let $U_{\mathcal F}\up{0} = \emptyset$. Analogously, for each integer $t \geq 0$, the set $U_{\mathcal F}\upc{t} = \bigcup_{i=0}^t U_{\mathcal F}\up{i}$ is the set of vertices that have performed a force by time $t$.

It is clear that the the size of the initial set $B \subseteq V(G)$ of blue vertices and the propagation time of $B$ are both important throughout a zero forcing process. As such, there are a variety of parameters that combine both of these quantities. For a subset $B \subseteq V(G)$, the \emph{$\X$ throttling number of $B$} is $\thx(G;B) = |B| + \ptx(G;B)$ and the \emph{$\X$ throttling number} of $G$ is $\thx(G) = \min\{\thx(G;B) \ | \ B \subseteq V(G)\}$. The concept of throttling for zero forcing was first introduced by Butler and Young in \cite{BY13Throt}. In recent years, throttling has become a significant area of research which has expanded to include many variants of zero forcing (see \cite{powerdomthrot, BY13Throt, JCThrot, PSD, skew}) and some variants of the game of cops and robbers (see \cite{CopThrot2, CRthrottle, damagethrot}). 

Sometimes, we may not want to give $|B|$ and $\ptx(G;B)$ equal weight when minimizing their sum. If $\omega$ is a nonnegative real number, the \emph{weighted $\X$ throttling number of $G$} is $\thwx(G) = \min \{|B| + \omega \cdot \ptx(G;B) \ | \ B \subseteq V(G)\}.$ We can also minimize a product instead of a sum. The \emph{(no initial cost) $\X$ product throttling number of $G$} is \[\thstar_{\X}(G) = \min\{|B| \cdot \ptx(G;B) \ | \ B \subsetneq V(G)\}.\] In each definition that involves an abstract color change rule $\X$, the $\X$ can be dropped if the exact rule is clear from context. 

In \cite{CK20}, the authors show that the problem of determining graphs with high standard or PSD throttling numbers is a forbidden subgraph problem.
\begin{thm}{\cite[Theorem 4.7]{CK20}}\label{thrm:CK20}
Suppose that $\X$ is either the standard or PSD color change rule and $k$ is a nonnegative integer. The set of graphs $G$ such that $\thx(G) \geq |V(G)| - k$ and $|V(G)| \geq k$ is characterized by a finite family of forbidden induced subgraphs.
\end{thm}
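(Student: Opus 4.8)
The plan is to work with the deficiency $f(G) = |V(G)| - \thx(G)$, so that the target condition $\thx(G) \ge |V(G)| - k$ becomes simply $f(G) \le k$. I would prove the theorem in two stages: first show that the class $\{G : f(G) \le k\}$ is closed under taking induced subgraphs, which automatically yields a (possibly infinite) forbidden induced subgraph characterization; then bound the number of vertices of any minimal forbidden subgraph by a function of $k$, which forces that family to be finite.

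For heredity, the key lemma is the monotonicity $\thx(G) \le \thx(G-v) + 1$ for every vertex $v$, equivalently $f(G-v) \le f(G)$. To see it, take a set $B'$ realizing $\thx(G-v)$ and use $B' \cup \{v\}$ in $G$. Since $v$ is colored blue from the start, adding it creates no new white neighbor of any vertex, so every force valid in $G-v$ remains valid in $G$; for the PSD rule one uses additionally that the white components of $(G-v)-B'$ are exactly the white components of $G-(B'\cup\{v\})$. Hence the process on $G-v$ lifts verbatim, giving $\thx(G) \le |B'| + 1 + \ptx(G-v;B')$. Iterating over deleted vertices shows $f(H) \le f(G)$ for every induced subgraph $H$, so the class $\{f \le k\}$ is hereditary and is characterized by the family $\mathcal F_k$ of induced-subgraph-minimal graphs with $f \ge k+1$. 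Graphs on fewer than $k$ vertices satisfy $f \le k$ trivially, so the theorem's set is precisely the graphs with $|V(G)| \ge k$ that are $\mathcal F_k$-free.

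The crux is finiteness: I would show that if $f(G) \ge k+1$ then $G$ has an induced subgraph $H$ with $f(H) \ge k+1$ and $|V(H)| \le (k+2)^2$, whence every member of $\mathcal F_k$ has at most $(k+2)^2$ vertices and $\mathcal F_k$ is finite. For the standard rule, fix a process realizing the excess. Its forces partition $V(G)$ into forcing chains $C_1, \ldots, C_m$; the propagation time of this process equals the longest chain length, and the number of vertices forced minus the propagation time equals $\sum_i \ell_i - \max_i \ell_i$, where $\ell_i$ is the number of vertices forced along $C_i$. Two operations preserve validity on the induced subgraph spanned by the retained vertices: truncating a chain to a prefix, and deleting a whole chain. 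In each case the only white neighbor of a retained forcing vertex was its successor, so deletion never invalidates a retained force. Discarding the chains that force nothing, I would then either keep $k+2$ chains truncated to a single force each (order $2(k+2)$, excess exactly $k+1$), when at least $k+2$ chains remain, or keep all at most $k+1$ chains truncated to at most $k+1$ forces each (order at most $(k+1)(k+2)$, excess $\sum_{i\ge 2}\min(\ell_i,k+1) \ge k+1$). Either way $H$ has order at most $(k+2)^2$ and $f(H) \ge k+1$.

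The main obstacle is the PSD case of this last step, where forcing chains become branching forcing forests. Two issues arise. First, deleting vertices can alter the white-component structure that governs PSD forces; as in the standard case, though, deletion only removes potential white neighbors and therefore never invalidates a retained force, so truncated subforests remain valid. Second, because trees branch, the excess can sit at a single deep, high-degree vertex, and truncating by depth alone no longer bounds the order. The remedy is to re-root: a branch vertex that forces many children may be declared an initial blue vertex of the witness, discarding its ancestors (for instance, a long path ending in a vertex with many pendant leaves yields the star witness $K_{1,k+2}$ once the path is dropped). Combining re-rooting with depth truncation again produces a witness whose order is bounded in terms of $k$. Making this re-rooting and the accompanying component bookkeeping precise for PSD is the part that needs the most care; the heredity step and the standard-rule extraction are comparatively routine.
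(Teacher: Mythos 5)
Your heredity step is sound and coincides with the paper's first move (Proposition \ref{forbidden}, stated there in supergraph form for all well behaved rules): adding the deleted vertices to a witnessing set lifts every force, for both $\Z$ and $\zp$. The genuine gap is in your extraction step, and it breaks already for the standard rule. Your argument rests on the identity ``the propagation time of this process equals the longest chain length,'' which is false for $\Z$: propagation time is bounded \emph{below}, not above, by the longest chain, because forces in one chain may have to wait for forces in other chains. The paper's Figure \ref{fig:chainVSpt} is precisely a counterexample: a six-vertex graph with $|B|=2$ where $\pt_{\Z}(G;B)=4$, yet the longest chain in any set of forces is strictly shorter. The same phenomenon defeats both of your cases. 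In Case A, after truncating $k+2$ chains to their first forces, a retained chain-start $v$ can be adjacent in $G$ (hence in $H$) to the first forced vertex $w'$ of a \emph{different} chain; in the original process $w'$ was forced after $v$ performed its force, so $v$'s force was valid then, but at time $1$ in $H$ the vertex $w'$ is white, so $v$ has two white neighbors and cannot force. (In Figure \ref{fig:chainVSpt}, retain $x_4\fs x_5$ and $x_1\fs x_2$: in the subgraph induced on $\{x_1,x_2,x_4,x_5\}$ the propagation time of $\{x_1,x_4\}$ is $2$, not $1$.) Thus ``excess exactly $k+1$'' does not follow; all you can conclude is $\pt(H;\cdot)\le k+2$, i.e.\ deficiency $\ge 0$. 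Case B fails the same way, since the propagation time in $H$ need not equal the longest truncated chain; and you yourself flag that the PSD ``re-rooting'' is unproven --- but the argument already collapses for $\Z$.

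The repair is to truncate in \emph{time} rather than along chains, which is the paper's route (Lemmas \ref{switexists} and \ref{savings} and the proof of Theorem \ref{finite}, whose $\omega=1$ case is this theorem). Forces performed within a single time step remain simultaneously valid in the induced subgraph on the vertices involved: for $\Z$, two same-step forces $v\fs w$ and $v'\fs w'$ admit no edge $vw'$, since $w'$ was still white when $v$ forced; for $\zp$ the analogue holds because white components only shrink under deletion. So one passes to a standard witness (Lemma \ref{switexists}: every time step saves at least one vertex), lets $r$ be the first step at which the accumulated savings exceed $k$ (whence $r\le k+1$), truncates the forces of step $r$, and takes $H$ to be the subgraph induced by the forcing and forced vertices of steps $1,\dots,r$; replaying the process step by step gives $\throt_{\X}(H)<|V(H)|-k$ with $|V(H)|\le 4k+4$. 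Your chain decomposition cannot be patched into this role because for $\Z$ the chain structure does not carry the timing information; tellingly, the identity you needed ($\pt$ equals the longest chain) is established in the paper only for co-local, co-symmetric, simple infection rules (Proposition \ref{prop:maxchain}), a class containing bootstrap percolation but excluding both $\Z$ and $\zp$.
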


Given that abstract color change rules can behave very differently and there are multiple types of throttling numbers, it is natural to ask the following question. To what extent can Theorem \ref{thrm:CK20} be generalized? In order to provide an answer to this question, we begin by identifying some convenient properties held by many zero forcing color change rules. 

\begin{defn}\label{def:wellBehaved}
A forcing color change rule $\X$ is \emph{well behaved} if for any graph $G$, any $\X$ forcing set $B \subseteq V(G)$ with any  set of $\X$ forces $\calf$ such that $\ptx(G;\calf)<\infty$,
\begin{enumerate}
    \item if $B$ is an $\X$ forcing set of $G$, then any super set of $B$ is an $\X$ forcing set of $G$,
    \item $|U_{\mathcal F}\up i|\leq |\calf\up i|$ for all $0\leq i\leq \ptx(G;\calf)$,
    \item $\calf ^{[i]}\setminus U_{\mathcal F}^{[i]}$ is an $\X$ forcing set of $G-U_{\mathcal F}^{[i]}$ for all $0\leq i\leq \ptx(G;\calf )$, and
    \item $U_{\mathcal F}^{[t]}$ is an $\X$ forcing set for $G-(B\setminus U_{\mathcal F}^{[t]})$ for $t=\ptx(G,\calf)$.
\end{enumerate}
We say a color change rule $\X$ is $\emph{nearly well behaved}$ if $\X$ only satisfies properties $1$ and $2$.  
\end{defn}

The conditions in Definition \ref{def:wellBehaved} seem restrictive at first, but they are in fact fairly natural for many variants of zero forcing. 
The first condition allows us to add vertices to $\X$ forcing sets.
The second condition ensures that in each time step, the number of new blue vertices is at least the number of vertices that performed a force. 
This condition is  trivially satisfied by most zero forcing color change rules since they typically specify that a particular vertex $v$ forces a white vertex $w$. 
In other words, the color change rule provides a surjective map from $\calf \up i $ to $U_{\mathcal F}\up i$, relating their cardinalities. 
Interestingly, the definition of well behaved allows for multiple  vertices to cooperatively force a white vertex, as long as the cardinality condition is met.
The third condition says that vertices that have already performed a force are not necessary for future forces and the fourth condition says that blue vertices in $B$ that never perform a force are not needed at all.

In Section \ref{sec:weighted}, we extend Theorem \ref{thrm:CK20} to weighted throttling for well behaved color change rules. Then, we introduce chain independent color change rules in order to investigate product throttling in Section \ref{sec:productThrottling}. In Section \ref{sec:colocal}, we make connections to bootstrap percolation by introducing color change rules whose forcing conditions depend on the neighborhood of the vertex being forced (rather than the vertex doing the forcing). All graphs in this paper are simple, finite, and undirected. Furthermore, we follow most of the graph theoretic notation found in \cite{Diestel}.

\section{Weighted throttling}\label{sec:weighted}


In this section, we examine the weighted throttling number $\thwx(G)$ for a well behaved color change rule $\X$. Note that when $\omega = 1$, $\thwx(G)$ specializes to the classic throttling number. If $\omega \geq 0$ and $\X$ is a well behaved color change rule, the following proposition demonstrates that high values of $\thwx(G)$ can be characterized using forbidden subgraphs.

\begin{prop}\label{forbidden}
Let $k$ be a constant and $\X$ be a well behaved color change rule. The set of graphs $G$ such that  $\thwx\geq |V(G)|-k$  and $|V(G)|\geq k$ is characterized by a family of forbidden induced subgraphs. 
\end{prop}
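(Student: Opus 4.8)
The plan is to recognize the statement as a heredity claim and reduce it to the standard equivalence between hereditary properties and forbidden induced subgraph characterizations. Recall that a class $\mathcal{C}$ of graphs is closed under induced subgraphs if and only if there is a family $\mathcal{H}$ of graphs such that $G \in \mathcal{C}$ exactly when $G$ has no induced subgraph isomorphic to a member of $\mathcal{H}$; one may take $\mathcal{H}$ to be the induced-subgraph-minimal graphs outside $\mathcal{C}$. Crucially, the present statement asks only for \emph{a} family (not a finite one, as in Theorem \ref{thrm:CK20}), so this equivalence is all we need. Thus it suffices to understand how the deficiency $d(G) := |V(G)| - \thwx(G)$ behaves under vertex deletion.

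The key step is a monotonicity lemma: for every graph $G$ and every $v \in V(G)$, $\thwx(G) \le \thwx(G-v)+1$, equivalently $d(G-v) \le d(G)$. To prove it, I would fix $B' \subseteq V(G-v)$ together with a set of $\X$ forces $\calf'$ realizing $\thwx(G-v) = |B'| + \omega\,\ptx(G-v;\calf')$, and consider $B := B' \cup \{v\}$ in $G$. The point is that the forces $\calf'$ remain valid once $v$ is colored blue, so $B$ is an $\X$ forcing set of $G$ and $\ptx(G;B) \le \ptx(G;\calf') = \ptx(G-v;\calf')$. Since $\omega \ge 0$, this gives $\thwx(G) \le (|B'|+1) + \omega\,\ptx(G-v;\calf') = \thwx(G-v)+1$. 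Iterating over the deleted vertices yields $d(H) \le d(G)$ for every induced subgraph $H$ of $G$, and in particular $d(G) = \max\{d(H) : H \text{ an induced subgraph of } G\}$. Hence $\{G : \thwx(G) \ge |V(G)|-k\} = \{G : d(G) \le k\}$ is closed under induced subgraphs.

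To finish, let $\mathcal{H}$ be the family of induced-subgraph-minimal graphs $H$ with $d(H) > k$. By the preceding paragraph, $d(G) \le k$ holds precisely when $G$ has no induced subgraph in $\mathcal{H}$, which characterizes $\{G : \thwx(G) \ge |V(G)|-k\}$. The extra hypothesis $|V(G)| \ge k$ only restricts the ambient class to graphs on at least $k$ vertices: every graph on fewer than $k$ vertices trivially satisfies $d(G) \le |V(G)| < k$ and is excluded solely by this size condition, so the characterization is understood within the class of graphs on at least $k$ vertices, where the same family $\mathcal{H}$ applies. Note that the argument uses only condition (1) of Definition \ref{def:wellBehaved}; conditions (2)--(4) are not needed for this weaker (possibly infinite family) statement.

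The main obstacle is the claim embedded in the monotonicity lemma that the forces $\calf'$ of $G-v$ remain executable in $G$ after $v$ is colored blue, so that the propagation time does not grow. For the standard and PSD rules this is immediate, since a blue vertex is never a white neighbor and therefore cannot obstruct a force; for an abstract well behaved rule it must be extracted from the axioms, and this is exactly where condition (1) of Definition \ref{def:wellBehaved} (closure of $\X$ forcing sets under supersets, i.e.\ the principle that additional blue vertices never disable forcing) carries the weight. Making this replay argument precise for arbitrary well behaved $\X$, rather than for a concrete rule, is the delicate part of the proof.
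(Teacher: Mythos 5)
Your proposal is correct and takes essentially the same approach as the paper: the paper proves the contrapositive form directly---if $\thwx(G) < |V(G)|-k$ and $G$ embeds in $H$ via $\phi$ with white set $W$, then $B' = V(H)\setminus\phi(W)$ witnesses $\thwx(H) < |V(H)|-k$---which is exactly your monotonicity lemma $\thwx(G) \le \thwx(G-v)+1$ applied to all the extra vertices at once rather than iterated one deletion at a time. Both arguments hinge on the identical replay principle (extra blue vertices never invalidate forces, so propagation time does not grow), and the paper justifies that step with the same appeal to well-behavedness that you make, so your proof is at the same level of rigor as the paper's.
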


\begin{proof}
 Suppose that $\thwx (G)<|V(G)|-k$ and let $H$ be any graph such that $G$ is an induced subgraph of $H$ with the injection $\phi:V(G)\to V(H)$.
 Let $B\subseteq V(G)$ be an  $\X$ forcing set that realizes $\throt(G; B) = \thwx (G) < V(G)-k$ and let $W = V(G) \setminus B$. 
 Then $B'=V(H)\setminus \phi(W)$ is an $\X$ forcing set of $H$.  
 This follows from the fact that if $v\rightarrow u$ is possible in $G$ given $B$, then $\phi(v)\rightarrow \phi(u)$ is possible in $H$ given $B'$ since $\X$ is well behaved.
 In particular, \[\thwx(H)\leq |B'|+\omega \cdot\ptx(H;B')=|V(H)\setminus \phi(V(G))|+|B|+\omega\cdot\ptx(G;B)<|V(H)|-k.\] Therefore, $B'$ is an $\X$ forcing set of $H$ that demonstrates that $\thwx(H)<V(H)-k$.
\end{proof}

Our goal is to show that the family of forbidden subgraphs in Propositon \ref{forbidden} is finite. This is achieved in \cite{CK20} for $\omega = 1$ and $\X \in \{\Z, \zp\}$ by making use of specific zero forcing sets called standard witnesses. The next definition formalizes this concept for weighted throttling.

\begin{defn}
We say an $\X$ forcing set $B\subseteq V(G)$ is a \emph{standard witness for $\thwx(G)< |V(G)|-k$}, if $|\calf \up i|-\omega>0$ for each time step $i$ and $|B| + \omega \cdot\ptx(G; B) < |V(G)|-k$ where $\calf$ is a set of forces such that $\ptx(G;\calf)=\ptx(G;B)$. 
\end{defn}

Before we prove the main result in this section, we need to establish some preliminary facts about standard witnesses and the weighted throttling number.

\begin{lem}\label{switexists}
Suppose $\X$ is a well behaved color change rule.
If $\thwx(G)< |V(G)|-k$, then there exists a standard witness $\thwx(G)< |V(G)|-k$.
\end{lem}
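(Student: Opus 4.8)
The plan is to start from a forcing set that already witnesses the strict inequality and then repeatedly absorb ``cheap'' time steps (those forcing at most $\omega$ vertices) into the initial blue set until every time step forces more than $\omega$ vertices. I would run this as a minimality argument. By hypothesis there is an $\X$ forcing set $B$ together with a set of forces $\calf$ satisfying $\ptx(G;\calf)=\ptx(G;B)$ and $|B|+\omega\cdot\ptx(G;B)<|V(G)|-k$. Among all such pairs $(B,\calf)$ realizing the strict inequality, I would choose one minimizing the propagation time $q:=\ptx(G;B)$ (a minimum exists since $G$ is finite). I claim this pair is already a standard witness: the throttling condition $|B|+\omega\cdot\ptx(G;B)<|V(G)|-k$ holds by the choice of $B$, so it remains only to verify the size condition $|\calf\up i|>\omega$ for every time step $1\le i\le q$.

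So suppose toward a contradiction that some time step $i$ has $|\calf\up i|\le\omega$, and set $B'=B\cup\calf\up i$. By the first property of well behaved rules (Definition~\ref{def:wellBehaved}), $B'$ is again an $\X$ forcing set, and since $\calf\up i$ is disjoint from $B$ we have $|B'|=|B|+|\calf\up i|$. The crucial point is that the propagation time drops by at least one, i.e.\ $\ptx(G;B')\le q-1$. Granting this, the throttling value of $B'$ satisfies
\[
|B'|+\omega\cdot\ptx(G;B')\le |B|+|\calf\up i|+\omega(q-1)=\bigl(|B|+\omega\cdot q\bigr)+\bigl(|\calf\up i|-\omega\bigr)\le |B|+\omega\cdot q<|V(G)|-k,
\]
using $|\calf\up i|\le\omega$. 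Hence $B'$, with a suitable optimal set of forces, still realizes the strict inequality but has strictly smaller propagation time, contradicting the minimality of $q$. Therefore no cheap time step exists, and the chosen pair is a standard witness.

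The main obstacle is establishing $\ptx(G;B')\le q-1$, and my plan here is to exhibit an explicit set of forces for $B'$. I would take $\calf'=\{(u\to w)\in\calf : w\notin\calf\up i\}$, deleting exactly the forces that originally produced $\calf\up i$ (whose targets are now initially blue), and then reschedule the survivors: the forces that originally fired at times $1,\dots,i-1$ are kept at the same times, while those that originally fired at times $i+1,\dots,q$ are performed one step earlier, at times $i,\dots,q-1$. Checking that this schedule is legal is where the real work lies: it requires that each surviving force remain valid at the moment it is performed. This is exactly the natural monotonicity of well behaved rules — a valid force $u\to w$ stays valid once additional vertices have turned blue, provided $w$ is still white — in the same spirit as the monotonicity invoked in the proof of Proposition~\ref{forbidden}. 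With that monotonicity in hand, a straightforward induction on $t$ shows that by time $t$ in the $B'$ process the blue set equals $\calf\upc{t}\cup\calf\up i$ for $t\le i-1$ and equals $\calf\upc{t+1}$ for $i-1\le t\le q-1$; in particular all of $V(G)$ is colored by time $q-1$. Since the deterministic coloring driven by the set $\calf'$ is at least this fast, we get $\ptx(G;B')\le\ptx(G;\calf')\le q-1$, as needed.

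Finally, I would note the degenerate cases $i=1$ and $i=q$, in which one of the two blocks of rescheduled forces is empty; these are routine, and the size condition is vacuous when $q=0$. It is worth remarking that the only ingredients genuinely used for this lemma are the first property of Definition~\ref{def:wellBehaved} and the force-monotonicity described above; Properties~2--4 are not needed here.
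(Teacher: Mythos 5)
Your proposal is correct and is built on the same core idea as the paper's proof: time steps that force at most $\omega$ vertices can be absorbed into the initial blue set using property 1 of Definition \ref{def:wellBehaved}. The difference is in execution. The paper does this in one shot: it sets $I$ to be the set of \emph{all} cheap time steps of $\calf$ and simply declares $B'=B\cup\bigcup_{i\in I}\calf\up{i}$ to be a standard witness ``since $\X$ is well behaved,'' leaving the propagation-time bookkeeping implicit. You instead absorb one cheap step at a time inside an extremal argument (minimize the propagation time among pairs realizing the strict inequality), and this buys something real: after a one-shot absorption, the compressed process run under the paper's greedy definition of $\calf'\up{t}$ can create \emph{new} cheap steps (forces from later expensive steps may fire early and leave stragglers behind), so the paper's construction in principle needs to be iterated; your minimality framing makes that iteration automatic, because the new pair only needs to realize the strict inequality with strictly smaller propagation time, not to be a witness itself. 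Two caveats. First, both you and the paper rely on force-monotonicity (a valid force stays valid once more vertices are blue, provided the target is still white), which is not literally among properties 1--4 of Definition \ref{def:wellBehaved}; since the paper invokes exactly this in the proof of Proposition \ref{forbidden}, you are on the same footing as the authors, and you are right to flag it. Second, a small repair: in your induction the blue set of the $B'$-process at time $t$ should be claimed to \emph{contain}, not equal, $\calf\upc{t}\cup\calf\up{i}$ (respectively $\calf\upc{t+1}$), because under the paper's definition all currently valid forces in $\calf'$ fire simultaneously and the process may run ahead of your schedule; containment is what the induction actually delivers, and it suffices for $\ptx(G;B')\le q-1$.
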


\begin{proof}
Let $B$ be an $\X$ forcing set such that $\thwx(G;B)< |V(G)|-k$. Let $\calf$ be a set of forces for $B$ that realizes $\ptx(G;B)$.
Let $I$ be the set of times $i$ such that $|\calf\up i|-\omega\leq 0$.
Then \[B'=B\cup \bigcup_{i\in I} \calf\up i\] is a standard witness for $\thwx(G)< |V(G)|-k$ since $\X$ is well behaved.
\end{proof}

\begin{lem}\label{savings}
Let $G$ be a graph, $\X$ be a color change rule, and $\omega$ be a non-negative real number. Then, $\thwx(G) < |V(G)|-k$ if and only if there exists an $\X$ forcing set $B \subseteq V(G)$ and set of forces $\calf$ such that $\ptx(G;B)=\ptx(G;\calf)$ and 
 \[\sum_{i=1}^{\ptx(G;B)} |\calf^{(i)}|-\omega  > k.\] 
\end{lem}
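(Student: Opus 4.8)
The plan is to reduce the inequality $\thwx(G) < |V(G)| - k$ to the stated sum by means of a single counting identity, exploiting the fact that an $\X$ forcing process partitions $V(G)$ according to when vertices turn blue. Fix an $\X$ forcing set $B$ together with a set of $\X$ forces $\calf$ for which $\ptx(G;\calf) = \ptx(G;B) =: p$, and recall that $\calf^{(0)} = B$ and that the sets $\calf^{(0)}, \calf^{(1)}, \dots, \calf^{(p)}$ are pairwise disjoint with union $\calf^{[p]} = V(G)$. Counting vertices by the time step in which they become blue therefore gives $|V(G)| = |B| + \sum_{i=1}^{p} |\calf^{(i)}|$.

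The key step is to rewrite the weighted throttling expression using this identity. Writing $\omega \cdot p = \sum_{i=1}^{p} \omega$ and combining the two sums yields
\[|B| + \omega \cdot \ptx(G;B) = |V(G)| - \sum_{i=1}^{p} \big(|\calf^{(i)}| - \omega\big).\]
Consequently $|B| + \omega \cdot \ptx(G;B) < |V(G)| - k$ holds if and only if $\sum_{i=1}^{p}\big(|\calf^{(i)}| - \omega\big) > k$, which is exactly the target inequality.

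Both directions of the biconditional then follow at once. For the forward direction, I would take $B$ to be a forcing set realizing $\thwx(G)$ and let $\calf$ be any set of forces with $\ptx(G;\calf) = \ptx(G;B)$; the displayed identity converts the hypothesis $\thwx(G) < |V(G)| - k$ into the desired sum inequality. For the converse, the same identity shows that any $B$ and $\calf$ satisfying the sum inequality give $|B| + \omega \cdot \ptx(G;B) < |V(G)| - k$, whence $\thwx(G) \leq |B| + \omega \cdot \ptx(G;B) < |V(G)| - k$ by minimality of the weighted throttling number.

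The argument is essentially a bookkeeping computation, so there is no substantive obstacle; the only point requiring care is the consistency condition $\ptx(G;\calf) = \ptx(G;B)$, which ensures that $\calf$ genuinely witnesses the minimum propagation time and that the partition of $V(G)$ uses exactly $p$ nontrivial time steps. Notably, the identity holds for an arbitrary color change rule $\X$ and makes no use of the well-behaved hypothesis, matching the stated generality of the lemma.
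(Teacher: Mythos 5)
Your proof is correct and takes essentially the same approach as the paper: both arguments rest on the observation that $\calf^{(1)},\dots,\calf^{(p)}$ partition $V(G)\setminus B$, giving the identity $\sum_{i=1}^{p}\bigl(|\calf^{(i)}|-\omega\bigr)=|V(G)|-|B|-\omega\cdot\ptx(G;B)$, from which both implications follow by rearrangement and the minimality defining $\thwx(G)$. The only difference is presentational: you state the identity once and read off both directions, whereas the paper carries out the same computation separately for each implication.
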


\begin{proof}
Let $B$ be an $\X$ forcing set of $G$ with \[\sum_{i=1}^{\ptx(G;B)} |\calf ^{(i)}|-\omega  > k.\] This implies that 
\begin{align*}
    |V(G)\setminus B|-\omega\cdot\ptx(G;B)&> k \\
    |V(G)|-|B|-\omega\cdot\ptx(G;B)&> k\\
    |V(G)|-k&> |B|+\omega\cdot\ptx(G;B)\\
    |V(G)|-k&> \thwx(G).
\end{align*}

To prove the converse, assume that $|V(G)|-k>\thwx(G)$ and let $B$ be an $\X$ forcing set that realizes this inequality. In particular, suppose that
\[|V(G)|-k> |B|+\omega\cdot\ptx(G;B).\]
This implies that 
\[|V(G)\setminus B|-\omega\cdot\ptx(G;B)> k.\]
 Since $B$ is an $\X$ forcing set, we can partition $V(G)\setminus B$ into $\calf ^{(i)}$ for $1\leq i \leq \ptx(G;B)$.
 Using this partition, we can count the elements in $V(G)\setminus B$ to obtain 
 \[\sum_{i=1}^{\ptx(G;B)} |\calf ^{(i)}|-\omega >k.\]
 This completes the proof.
\end{proof}

The following theorem extends Theorem \ref{thrm:CK20} to well behaved color change rules and arbitrary non-negative weights.

\begin{thm}\label{finite}
Let $\X$ be a well behaved color change rule. The set of graphs $G$ such that $\thwx(G)\geq |V(G)|-k$ and $|V(G)|\geq k$ is characterized by a finite family of forbidden induced subgraphs. 
\end{thm}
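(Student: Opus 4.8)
The plan is to show that the infinite-looking family of forbidden induced subgraphs from Proposition \ref{forbidden} can be reduced to a finite one. The strategy follows the structure of \cite{CK20}: establish a uniform bound on the number of vertices in any \emph{minimal} forbidden subgraph, so that the family may be replaced by the finitely many graphs on at most that many vertices. The key tool is the standard witness machinery just developed: by Lemma \ref{switexists}, whenever $\thwx(G) < |V(G)| - k$ there is a standard witness $B$, and by Lemma \ref{savings} the condition $\thwx(G) < |V(G)| - k$ is equivalent to the existence of an $\X$ forcing set $B$ with a set of forces $\calf$ satisfying $\sum_{i=1}^{\ptx(G;B)} \left(|\calf\up i| - \omega\right) > k$.

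First I would fix a graph $G$ with $\thwx(G) < |V(G)| - k$ and take a standard witness $B$ with an associated set of forces $\calf$ realizing $\ptx(G;B)$. The goal is to extract from $G$ a bounded-size induced subgraph $G'$ that still satisfies the strict inequality $\thwx(G') < |V(G')| - k$; this $G'$ is then a ``small'' member of the forbidden family, and $G$ forbids the original family iff it forbids this finite reduct. The natural candidate for $G'$ is an induced subgraph supported on the blue set $B$ together with enough witnessing structure to keep the forcing process intact. Here the well behaved conditions (Definition \ref{def:wellBehaved}) do the heavy lifting: conditions (2) and (4) let me discard blue vertices that never perform a force, and condition (3) lets me peel away vertices that have already forced, so that the truly necessary part of $B$ and of each $\calf\up i$ can be kept while the rest is removed, all while preserving that the remaining set is still a forcing set with controlled propagation time.

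The crucial quantitative step is bounding $|V(G')|$ independently of $G$. Using Lemma \ref{savings} in the direction that a standard witness forces $\sum_i (|\calf\up i| - \omega) > k$ with each summand positive (by the standard witness condition $|\calf\up i| - \omega > 0$), I would argue that the propagation time $\ptx(G;B)$ is bounded in terms of $k$ and $\omega$: since each of the $\ptx(G;B)$ positive summands contributes at most some fixed amount depending only on how large $|\calf\up i|$ can be made while retaining the witness property, the number of time steps cannot be too large. Simultaneously, condition (2) ties $|U_\calf\up i|$ to $|\calf\up i|$, so the number of forcing vertices is controlled by the same sums. Combining a bound on the number of time steps with a bound on the size of each layer $\calf\up i$ and each $U_\calf\up i$ yields a bound on the total number of vertices needed to support the witness, and hence on $|V(G')|$, purely in terms of $k$ and $\omega$.

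The main obstacle I anticipate is the last step: ensuring that the truncated subgraph $G'$ still satisfies the \emph{strict} inequality and that all four well behaved properties survive the surgery when they are applied iteratively across time steps rather than once. Peeling off $U_\calf\upc{i}$ using condition (3) at one time step changes the graph on which the next step's property must be invoked, so I would need to verify that the well behaved conditions are stable under these successive vertex deletions, and that the savings quantity $\sum_i (|\calf\up i| - \omega)$ is not decreased below $k$ by the deletions. Once the uniform vertex bound $N = N(k,\omega)$ is in hand, the conclusion is immediate: every forbidden subgraph contains an induced forbidden subgraph on at most $N$ vertices, so the family from Proposition \ref{forbidden} may be replaced by its finitely many members of order at most $N$, which is the desired finite characterization.
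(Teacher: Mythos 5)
Your overall plan---reduce Proposition \ref{forbidden} to a finite family by bounding the order of a minimal forbidden subgraph, using standard witnesses and Lemma \ref{savings}, and keeping only the forcing vertices together with the forced vertices---is the same skeleton as the paper's proof. But the crucial quantitative step, as you state it, is wrong, and this is a genuine gap rather than a detail. You claim that for a standard witness $B$ the propagation time $\ptx(G;B)$ is bounded in terms of $k$ and $\omega$. It is not: take standard zero forcing on a path $P_n$ with $\omega = 1/2$ and $B$ a single endpoint. Every layer satisfies $|\calf\up{i}| - \omega = 1/2 > 0$, and $|B| + \omega\cdot\ptx(G;B) = 1 + (n-1)/2 < n - k$ once $n > 2k+1$, so $B$ is a standard witness, yet $\ptx(G;B) = n-1$ is unbounded. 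The witness and savings conditions bound the total savings $\sum_i \left(|\calf\up{i}|-\omega\right)$ from \emph{below} by $k$; they give no upper bound on the number of time steps, on the total number of forces, or on the size $|\calf\up{i}|$ of any single layer (one layer may force arbitrarily many vertices). Your inference---``each summand contributes at most some fixed amount, hence the number of time steps cannot be too large''---runs in the wrong direction: to bound the number of summands you need each summand bounded below (which the witness property does give, each is at least $1-\omega_\R$) \emph{and} the total bounded above, which you never arrange.

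The missing idea is truncation. The paper stops the process at the \emph{first} time step $r$ at which the cumulative savings exceed $k$, and then shrinks the last layer to a subset $\hat\calf\up{r} \subseteq \calf\up{r}$ so that the cumulative savings land in the window $(k, k+1]$. Only after this surgery do the bounds materialize: minimality of $r$ plus the per-layer lower bound $1-\omega_\R$ give $r \leq \frac{k+1}{1-\omega_\R}$; the window gives $\sum_{i=1}^{r} |\hat\calf\up{i}| \leq k+1+r\omega$; and condition (2) of Definition \ref{def:wellBehaved} transfers this bound to the forcing vertices, so the kept set $S = \bigcup_{i=1}^{r}\bigl(U_{\calf}\up{i} \cup \hat\calf\up{i}\bigr)$ satisfies $|S| \leq 2(k+1) + \frac{2\omega(k+1)}{1-\omega_\R}$. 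The truncated process, restarted from the forcing vertices, still runs inside $H = G[S]$, and its cumulative savings still exceed $k$, which is exactly what yields $\thwx(H) < |V(H)| - k$ and places $H$ in the finite family. Without truncation, the set you propose to keep (all of the layers $\calf\up{i}$ plus their forcing vertices) has size on the order of $|V(G)| - |B|$, and no amount of peeling via conditions (3) and (4) will bound it.
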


\begin{proof}
Notice that we can write $\omega=\omega_\N+\omega_\R$ where $\omega_\N\in \N$ and $0\leq \omega_\R<1$ with $\omega_\R\in \R$.
Let $k$ be a non-negative integer and $\mathcal G$ be the set of all graphs $G$ such that $\thwx (G)<|V(G)|-k$ and $|V(G)|\leq2(k+1)+\frac{2\omega(k+1)}{1-\omega_\R}$. 
We will prove the claim that if $\thwx(G)<|V(G)|-k$ and $|V(G)|\geq k$, then $G$ contains a graph in $\mathcal G$ as an induced subgraph. 
By Lemma \ref{savings}, there exists a zero forcing set $B$  and set of forces $\calf$ such that $\ptx(G;B)=\ptx(G;\calf)$ and
\[\sum_{i=1}^{\ptx^\omega(G;B)} |\calf^{(i)}|-\omega  > k.\]
Without loss of generality, assume that $B$ is a standard witness for $\thwx(G)<|V(G)|-k$.
Let $\X$ be the first time step at which $\sum_{i=1}^{r} |\calf^{(i)}|-\omega  >k.$ 
In fact, we can choose $\hat \calf ^{(r)}\subseteq \calf^{(r)}$ so that \[|\hat \calf^{(r)}|-\omega+\sum_{i=1}^{r-1} |\calf^{(i)}|-\omega \leq k+1.\] 
To avoid cumbersome notation, let $\hat \calf^{(i)}= \calf\up i$ for each $1\leq i \leq r-1$ so that \[\sum_{i=1}^{r}|\hat \calf\up{i}| - \omega \leq  k+1.\]
Since $B$ is a standard witness for $\thwx(G) < |V(G)| - k$, $r\leq \frac{k+1}{1-\omega_\R}$. 
Let $H=G[S]$ where \[S=\bigcup_{i=1}^r U_{\mathcal F}^{(i)}\cup \hat \calf^{(i)}.\]

First, we will show that $\thwx (H)< |V(H)|-k$. 
Then, we will show that $|V(H)|\leq 2(k+1)+\frac{2\omega(k+1)}{1-\omega_\R}$. 
This will prove that $H$ is in $\mathcal G$.

Let \[\hat B=\bigcup_{i=1}^r \left(U_{\mathcal F}^{(i)}\setminus \bigcup_{j=1}^{i-1}\hat  B^{(j)}\right).\] 
We will prove that $\hat B\up i$ is blue after time step $i$ by induction on $i$, assuming that $\hat B$ is the initial zero forcing set. 
As a base case, $\hat B$ is a set of blue vertices in $H$ after $0$ time steps by construction. 
We will assume that the sets $\hat \calf^{(j)}$ for $0\leq j\leq i-1$ are blue at the beginning of time step $i$.
This implies that $U_{\mathcal F}^{(i)}$ is blue at the beginning of time step $i$. 
Since $H$ is an induced subgraph of $G$ that contains $U_{\mathcal F}^{(i)}$ and $\hat \calf^{(i)}$, the set $U_{\mathcal F}^{(i)}$ can force $\hat \calf^{(i)}$ in $H$. 
Therefore, after time step $i$, the vertices in $\hat \calf^{(i)}$ are blue in $H$. 
Thus, $\hat B$ can force all of $H$ in at most $\X$ time steps. 
Now, 
\[\thwx(H)\leq |V(H)|-\sum_{i=1}^r |\hat \calf^{(i)}|-\omega< |V(H)|-k\]
by Lemma 4.4.

Notice that $|U_{\mathcal F}^{(i)}|\leq |\hat \calf^{(i)}|$ by the $\X$ color change rule (this is an equality for standard zero forcing, but can be an inequality for PSD zero forcing). 
Therefore,
\[ |S|\leq \sum_{i=1}^{r} |U_{\mathcal F}^{(i)}|+|\hat \calf^{(i)}|\leq 2\sum_{i=1}^{r}|\hat \calf^{(i)}|\leq 2(k+1+r\omega)\leq 2(k+1)+\frac{2\omega(k+1)}{1-\omega_\R}.\] 
Thus, $H=G[S]$ is a graph in $\mathcal G$. 
\end{proof}

Counting exactly how many graphs are forbidden  seems hard. 
The size of the largest forbidden graph serves as an intuitive proxy for the number of graphs that are forbidden. 
A detail hidden in the proof of Theorem \ref{finite} is that the largest forbidden graph in the characterizing family has order at most  $2(k+1)+\frac{2\omega(k+1)}{1-\omega_\R}$.
By contrast, the largest graph forbidden in the proof of Theorem \ref{thrm:CK20} (which is the unweighted analog of Theorem \ref{finite}) has order $4k+4$. 
This quantity is recovered exactly when $\omega =1$.

\section{Product throttling}\label{sec:productThrottling}

In this section, we investigate no initial cost product throttling for abstract color change rules. 
An overview of the current literature on product throttling is given in \cite{survey} where the authors set up the following notation.
Let $G$ be a graph, $\X$ be an abstract color change rule, and  $k$ be a non-negative integer. Then, $\ptx(G,k)$ is the minimum value of $\ptx(G;B)$ where $B$ ranges over all $\X$ forcing sets of $G$ of size $k$.
Furthermore, for a non-negative integer $p$, $k_{\X}(G,p)$ is the minimum cardinality of an $\X$ forcing set $B$ such that $\ptx(G;B) = p$.
The following theorem concerns no initial cost product throttling for standard zero forcing.

\begin{thm}\cite[Theorem 5.3]{survey}\label{survey}
For any graph $G$, $\throt_{\Z}^*(G)$ is the least $k$ such that $\pt_{\Z}(G,k)=1$, i.e., $\throt_{\Z}^*(G)=k_{\Z}(G,1)$. Necessarily, $k_{\Z}(G,1)\geq \frac{n}{2}.$
\end{thm}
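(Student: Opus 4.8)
The plan is to separate the statement into the identity $\thstar_\Z(G)=k_\Z(G,1)$ and the bound $k_\Z(G,1)\ge \tfrac n2$, and to prove the identity by grouping forcing sets according to propagation time. Since every proper subset of $V(G)$ has propagation time at least $1$,
\[ \thstar_\Z(G)=\min_{B\subsetneq V(G)}|B|\cdot\pt_\Z(G;B)=\min_{p\ge 1}\,p\cdot k_\Z(G,p). \]
Taking the $p=1$ term gives $\thstar_\Z(G)\le k_\Z(G,1)$ immediately, so the whole content of the identity is the reverse inequality $p\cdot k_\Z(G,p)\ge k_\Z(G,1)$ for every $p\ge 1$. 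The phrasing ``least $k$ with $\pt_\Z(G,k)=1$'' then matches $k_\Z(G,1)$ after the easy observation that a propagation-time-$1$ forcing set stays a propagation-time-$1$ forcing set when a vertex is added, so the set of admissible sizes is an up-set whose least element is $k_\Z(G,1)$.

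To prove $p\cdot k_\Z(G,p)\ge k_\Z(G,1)$, I would fix a forcing set $B$ with $|B|=k_\Z(G,p)=:k$ and a set of forces $\calf$ realizing $\pt_\Z(G;B)=p$, and study the layers $\calf^{(1)},\dots,\calf^{(p)}$, which partition $V(G)\setminus B$ so that $\sum_{i=1}^p|\calf^{(i)}|=n-k$. I need two facts. First, $|\calf^{(i)}|\le k$ for every $i$: since $\calf$ decomposes $V(G)$ into exactly $k$ forcing chains and the color-times strictly increase along each chain, each chain contributes at most one vertex to a given layer, so $|\calf^{(i)}|$ is at most the number of chains. This yields $n-k\le pk$, i.e.\ $n\le(p+1)k$. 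Second, for an index $i^\ast$ maximizing $|\calf^{(i)}|$, the set $B'=V(G)\setminus\calf^{(i^\ast)}$ is a forcing set with propagation time exactly $1$: each $w\in\calf^{(i^\ast)}$ has, in the original process, a forcer $u\in U_{\calf}^{(i^\ast)}\subseteq B'$ for which $w$ was the unique white neighbor among $\bigcup_{j\ge i^\ast}\calf^{(j)}$, and since $B'$ leaves exactly $\calf^{(i^\ast)}$ white, $w$ remains the unique white neighbor of $u$, so all of $\calf^{(i^\ast)}$ is forced in one step.

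Combining the two facts, $\max_i|\calf^{(i)}|\ge (n-k)/p$, so $|B'|=n-\max_i|\calf^{(i)}|\le\frac{n(p-1)+k}{p}$, and substituting $n\le(p+1)k$ gives $|B'|\le pk$. Hence $k_\Z(G,1)\le|B'|\le p\cdot k_\Z(G,p)$, completing the identity. For the final bound, I would note that in any propagation-time-$1$ forcing set $B'$ each white vertex is forced in the single step by a blue vertex having it as its unique white neighbor; distinct white vertices cannot share a forcer, so the forcer assignment is an injection from $V(G)\setminus B'$ into $B'$, giving $n-|B'|\le|B'|$ and thus $k_\Z(G,1)\ge\tfrac n2$.

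I expect the main obstacle to be fact two: arguing that deleting the single largest layer $\calf^{(i^\ast)}$ simultaneously preserves the unique-white-neighbor condition for every forcer, and that the propagation time is exactly $1$ (which is clear once $B'$ is seen to be a proper subset). The layer bound $|\calf^{(i)}|\le k$ via the chain decomposition is routine but should be stated carefully, and I would flag the degenerate case in which $G$ admits no proper forcing set (for instance when every vertex is isolated), where the statement is vacuous or handled by convention.
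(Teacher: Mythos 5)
Your proof is correct, but it takes a genuinely different route from the one the paper relies on. The paper does not reprove this theorem; it cites \cite[Theorem 5.3]{survey}, and the proof it recounts is reversal-based: the reversal $\operatorname{rev}(B)$ of a standard zero forcing set $B$ is again a zero forcing set of size $|B|$, and $\pt_{\Z}(G;B\cup\operatorname{rev}(B))\le \pt_{\Z}(G;B)/2$, so one can repeatedly at most double the size of the blue set while halving the propagation time, never increasing the product, until the propagation time reaches $1$. You replace this iteration with a one-shot construction: from a set of forces $\calf$ realizing $\pt_{\Z}(G;B)=p$ you take $B'=V(G)\setminus\calf^{(i^*)}$ for a largest layer $\calf^{(i^*)}$, observe that the unique-white-neighbor condition is monotone under shrinking the white set (so $\pt_{\Z}(G;B')=1$, with the forcers automatically distinct), and bound $|B'|$ via the chain decomposition of standard zero forcing (each vertex forces at most once, so the forces split into at most $|B|$ chains, each meeting every layer at most once, giving $|\calf^{(i)}|\le|B|$ and $n\le(p+1)|B|$); the arithmetic then yields $k_{\Z}(G,1)\le|B'|\le p\cdot k_{\Z}(G,p)$ for every $p\ge 1$, which is exactly the reverse inequality needed after writing $\throt^*_{\Z}(G)=\min_{p\ge1}p\cdot k_{\Z}(G,p)$. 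What the reversal proof buys is brevity and the structural byproduct that $\operatorname{rev}(B)$ itself witnesses fast forcing; what your proof buys is complete independence from reversals, which the paper singles out as the ingredient that fails to generalize (``not every zero forcing rule has nice reversals''), together with the explicit quantitative bound $k_{\Z}(G,1)\le p\cdot k_{\Z}(G,p)$. Be aware, though, that your argument is still tied to the standard rule: the monotonicity step is an instance of the abstract properties the paper isolates (being simple and well behaved), but the layer bound $|\calf^{(i)}|\le|B|$ fails for PSD forcing, where one vertex can force several vertices in distinct components in a single step, so your route, like the reversal route, does not directly give the PSD analog that the paper instead proves via forbidden subgraphs in Section 3.1. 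Your side remarks (the up-set observation justifying the ``least $k$'' phrasing, the injection argument for $k_{\Z}(G,1)\ge n/2$, and flagging the degenerate case of graphs with no proper forcing set) are all sound.
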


Theorem \ref{survey} states that the standard product throttling number for a graph is always achieved by a set that performs all of its forces in one time step. 
Since the condition that $\throt_{\Z}^*(G)>|V(G)|-k$ implies that no zero forcing set of $G$ forces $k$ or more vertices in one time step, Theorem \ref{survey} immediately gives that $\{G: \throt_{\Z}^*(G)> |V(G)|-k\}$ is characterized by a finite family of forbidden induced subgraphs. 
In particular, the forbidden family is given by \[\mathcal G=\{G:|V(G)|=2k,\exists B\subset V(G)\text{ s.t. } |B\up 1|=k, \pt_{\Z}(G;B)=1\}.\]
In \cite{survey}, this family is considered as $M$-sum graphs where $M$ is a $k$-matching.

Recall that in the context of standard and PSD zero forcing, that the terminus of a set of forces $\calf$ is the set of vertices in $V(G)$ that do not perform a force in $\calf$. 
Furthermore, the the reversal of a forcing set $B$, denoted $\operatorname{rev}(B)$, is the terminus of an arbitrary set of forces of $B$.
A key fact is that $\operatorname{rev}(B)$ is a (PSD) zero forcing set.

The proof of Theorem \ref{survey} relies on two facts about the standard zero forcing process. 
First, a reversal of a zero forcing set $B$ of size $|B|$ is also a zero forcing set of size $|B|.$ 
Second, $\pt(G;\text{rev}(B)\cup B)\leq \frac{\pt(G;B)}{2}.$
In combination, these two facts imply that for any zero forcing set $B$, we can find a zero forcing set that is at most twice as large as $B$, but propagates in at most half the time of $B$. 
In the context of product throttling, this is enough to conclude that any product throttling number can be realized by a zero forcing set that performs all its forces in one time step. 

Unfortunately, not every zero forcing rule has nice reversals (or a reversal may not be a $\X$ forcing set). 
In particular, if $\operatorname{rev}_+(B)$ is a reversal of a PSD zero forcing set $B$, then $|\operatorname{rev}_+(B)|$ can be much larger than $|B|$ (consider a large $d$-ary tree). 
Furthermore, $\pt_+(G;\operatorname{rev}_+(B)\cup B)$ only improves on $\pt_+(G;B)$ by a factor of around $1/2$. 

The goal of the remainder of this section is to prove versions of Theorem \ref{survey} for color change rules where the reversal based proof from \cite{survey} does not generalize. 
Subsection \ref{sec:PSDproduct} presents the PSD analog of Theorem \ref{survey}. 
While interesting in its own right, the PSD analog also provides a road map for the kinds of properties an abstract color change rule should have to make an alternative proof of Theorem \ref{survey} work. 
Subsection \ref{sec:genproduct} will define some properties of abstract color change rules that generalize Theorem \ref{survey} to a number of different zero forcing color change rules.

\subsection{PSD product throttling}\label{sec:PSDproduct}

Theorem \ref{psdprod} is the PSD zero forcing analog of Theorem \ref{survey}. 
Since we cannot generalize the reversal based proof of Theorem \ref{survey} to PSD zero forcing, we will prove Theorem \ref{psdprod} using a set of forbidden subgraphs.

\begin{thm}\label{psdprod}
For a positive integer $k$, let  $\mathcal G_{+,k}$ be the set of graphs $G$ with at most $2k$ vertices such that there exists an PSD forcing set $B$ and set of  forces $\calf$ with $\pt_+(G;B)=\pt_+(G;\calf)=1$ and $|\calf\up 1|=k.$

\begin{enumerate}
\item If $G$ contains a graph in $\mathcal G_{+,k}$, then $|V(G)|-k\geq \throt^*_{+}(G)$.

\item Suppose $G$ is a non-empty graph on $n$ vertices that does not contain a graph in $\mathcal G_{+,k}$ as an induced subgraph. 
If  $1<k<(n/12)^{1/3}$, then $\throt^*_{+}(G)=k_{+}(G,1)>|V(G)|-k.$ 

\end{enumerate}
\end{thm}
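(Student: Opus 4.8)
For the first statement I would simply extend a witnessing subgraph. Suppose $H\in\mathcal G_{+,k}$ is an induced subgraph of $G$, with PSD forcing set $B_H$, $\pt_+(H;B_H)=1$, and $|\calf\up 1|=k$, and let $W_H=V(H)\setminus B_H$ be the $k$ vertices forced in the single step. I would take $B=V(G)\setminus W_H$ and verify $\pt_+(G;B)=1$: the white set of $G$ is exactly $W_H$, so $G[W_H]=H[W_H]$ has the same components as in $H$, and because $H$ is induced each forcer $v_w\in B_H$ still satisfies $N_G(v_w)\cap C_w=\{w\}$ in the component $C_w$ of $w$. Hence every vertex of $W_H$ is forced in one step of $G$, giving $\throt^*_+(G)\le |B|\cdot 1=|V(G)|-k$.

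For the second statement I would use two extraction lemmas valid whenever $G$ has no induced member of $\mathcal G_{+,k}$. First (single step): if some step of some PSD process forces at least $k$ vertices, then $k$ of them together with a private forcer for each form an induced member of $\mathcal G_{+,k}$; hence every step forces at most $k-1$ vertices, which already gives $k_{+}(G,1)\ge |V(G)|-k+1>|V(G)|-k$. Second (spread-out): if $G$ has $k$ pairwise non-adjacent non-isolated vertices, then a neighbor of each is a private forcer and again builds a member of $\mathcal G_{+,k}$; hence $\alpha\le k-1$ on the non-isolated vertices. Since $\throt^*_+(G)\le k_{+}(G,1)$ trivially, it remains to exclude an optimal product-throttling set $B$ with $\pt_+(G;B)=p\ge 2$ and $|B|\cdot p<k_{+}(G,1)$. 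Noting that $\calf\upc{p-1}$ is itself a one-step forcing set, I get $k_{+}(G,1)\le |\calf\upc{p-1}|\le |B|+(p-1)(k-1)$, which combined with $|B|p<k_{+}(G,1)$ forces $|B|\le k-2$.

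The heart of the proof is a structural bound showing this configuration cannot occur once $n>12k^3$. I would analyze the forcing forest in which each forced vertex points to its forcer, so that root-to-leaf paths are forcing chains. Forcing chains are induced paths, and an induced path on $j$ vertices contains an independent set of size $\lceil j/2\rceil$, so $\alpha\le k-1$ forces each chain to have at most $2k-2$ vertices; thus the forest has depth at most $2k-2$. Moreover any two children of a common node are forced from different white components (otherwise their common forcer would have two white neighbors in one component), and since white components only refine over time, any two leaves of one tree lie in different components at the moment the earlier is forced, hence are non-adjacent. So the leaves of each tree form an independent set of non-isolated vertices, giving at most $k-1$ leaves per tree and at most $|B|(k-1)$ in all. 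Bounding the vertex count of the forest by (number of leaves)$\times$(maximum path length) yields $n\le |B|(k-1)(2k-2)<12k^3$, contradicting $n>12k^3$. Therefore the optimum occurs at $p=1$, i.e. $\throt^*_+(G)=k_{+}(G,1)$, which by the first lemma exceeds $|V(G)|-k$.

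The main obstacle is precisely this last step: showing that the two local obstructions ($\le k-1$ forced per step, and $\alpha\le k-1$) constrain the global process enough to cap $|V(G)|$ by a cubic polynomial in $k$. The delicate inputs are that PSD forcing chains really are induced paths (so small independence number bounds their length) and that the separation of sibling forces is inherited by all descendants (so every tree has few leaves); once these are established the cubic counting is immediate and the constant $12$ leaves comfortable slack.
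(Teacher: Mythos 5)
Your proof is correct, and your Part 2 follows a genuinely different route from the paper's. (Part 1 is essentially the paper's argument: both color $V(G)\setminus W_H$ blue; you verify the one-step forcing in $G$ by hand where the paper cites well-behavedness of the PSD rule.) In Part 2 the paper proves two claims --- every forcing chain of an optimal set has at most $3k-2$ vertices, shown by extracting an induced $k$-edge matching from a longer chain, and every time step performs at most $k-1$ forces --- and then combines $t(k-1)\geq n-b$, $b(3k-3)(k-1)\geq n-b$, and $tb\leq n-1$ to rule out $b\leq n/2$, forcing $b\geq n/2$ and hence $t=1$. You keep the per-step claim but replace the matching extraction with a stronger structural consequence of forbidding $\mathcal G_{+,k}$: the non-isolated vertices have independence number at most $k-1$ (an independent set of $k$ non-isolated vertices together with chosen neighbors is a one-step member of $\mathcal G_{+,k}$, because the white components are singletons and PSD forcing permits simultaneous forces into different components --- precisely the feature of the PSD rule that defeats the reversal argument). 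This yields the sharper chain bound $2k-2$, and your sibling-separation argument (children of a common forcer lie in distinct white components at the earlier forcing time, and the still-white chains hanging below them inherit that separation) bounds each forcing tree by $k-1$ leaves; the paper has no counterpart of this and instead asserts, essentially without proof, that a PSD forcing tree has no level of size $k$ or greater. Your logical reduction also differs: instead of the $b$ versus $n/2$ dichotomy, you observe that $\calf\upc{p-1}$ is a one-step forcing set, so an optimum with $p\geq 2$ strictly beating $k_+(G,1)$ forces $|B|\leq k-2$, which contradicts the forest count $n\leq |B|(k-1)(2k-2)<2k^3<12k^3$. The paper's counting is faster once its claims are granted; your version isolates reusable structure (the independence bound, the leaf bound, and the clean $p\geq2\Rightarrow|B|\leq k-2$ step), justifies the tree-size estimate the paper leaves implicit, and makes visible that the constant $12$ has ample slack. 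One small wording caution: the two leaves need not both still be white ``at the moment the earlier [leaf] is forced''; the separation should be anchored at the time the earlier \emph{sibling} below the divergence vertex is forced, when both leaf-chains are entirely white --- which is what your argument actually uses, so the issue is phrasing, not substance.
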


\begin{proof}[Proof of 1.] Suppose $G$ contains a graph in $\mathcal G_{+,k}$ as an induced subgraph on vertex set $S\subseteq V(G)$. 
Since $G[S]\in \mathcal G_{+,k}$, there exists an PSD forcing set $B_S$ such that $\pt_+(G[S];B_S)=1$ and $|S|-|B_S| = k$. 
Therefore, 
$\throt^*_{+}(G[S])\leq |S|-k$. 
Since the PSD zero forcing color change rule  is well behaved, this implies that $\throt^*_{+}(G)\leq |V(G)|-k.$
\end{proof}

\begin{proof}[Proof of 2.]
Suppose that $B$ is a PSD zero forcing set of $G$ with propagation time $t$ that achieves the PSD product throttling number of $G$. 
Let $|B|=b$. 
Let $\calf$ be a set of forces which realizes the propagation time of $B$ in $G$. 

Since $G$ does not contain a graph in $\mathcal G_{+,k}$, all forcing chains of $B$ given $\calf$ have at most $3k-2$ vertices. 
To prove this claim, suppose that 
\[x_1\fs x_2\fs\cdots\fs x_{3k-1}\]
is a PSD forcing chain of $B$ given $\calf$. 
Let $S=\{x_i: i \not\equiv 0 \mod 3\}$ and notice that $G[S]$ is a graph on $2k$ vertices. 
Furthermore, by the definition of PSD zero forcing and propagation time $x_i$ is adjacent to $x_j$ if and only if $j=i+1$ or $j=i-1$. 
In particular, $G[S]$ is a matching with $k$ edges. 
Therefore, $B_S=\{x_i: i\equiv 1\mod 3\}$ is a PSD zero forcing set of $G[S]$ with $\pt_+(G[S],B_S)=1$ and $|S|-|B_S|=k$. 
This contradicts the fact that $G$ does not contain a graph in $\mathcal G_{+,k}$.

Furthermore, since $G$ does not contain a graph in $\mathcal G_{+,k}$, each time step of forcing has at most $k-1$ forces. 
To prove this claim, suppose that $|\calf \up i|\geq k$ for some $1\leq i\leq t$. 
Let $F_1\subseteq \calf\up i$ such that $|F_1|= k$, and let $U_1 = \{x\in U\up i: x\fs y \in \calf, y \in F_1\}$.
Let $S= U_1\up i \cup \calf_1\up i= U_1\up i \cup F_1$
Since the PSD zero forcing color change rule is well behaved, we have that $|S|\leq 2k$, $\pt_+(G[S];U_1\up i)=1$ and $|F_1|=k$. 
Thus, $G[S]\in \mathcal G_{+,k}$, which is a contradiction.
Therefore, \[t(k-1)\geq n-b\quad \text{and}\quad b(3k-3)(k-1) \geq n-b.\]
In particular, the second bound comes from the fact that a PSD forcing tree cannot have a level of size $k$ or greater.
Putting the two bounds together gives
\[tb(3k-2)(k-1)^2\geq (n-b)^2.\]

For the sake of contradiction, suppose that $b\leq n/2$
Recall that $tb\leq n-1$, so we have 
\begin{align*}
    n(3k-3)(k-1)^2&\geq n^2/4.
\end{align*}
This is also a contradiction for large enough $n$. 
Therefore, $b\geq n/2$. 

Since $b\geq n/2$, it must follow that $t=1$. 
Finally, $\throt^*_{+}(G)=k_{+}(G,1)>|V(G)|-k$ because at most $k-1$ vertices can be forced in one time step.

\end{proof}

The key insight in the proof of Theorem \ref{psdprod} is that long PSD zero forcing chains will induce graphs in the forbidden family. 
The proof of this fact used specific knowledge about the PSD zero forcing color change rule. 
In particular, we know that PSD forces must occur on edges, and therefore, PSD forcing chains that realize the propagation time induce paths.

It does not seem like using long forcing changes to find forbidden graphs will work for any well behaved color change rules.
To see this, suppose that 
\[x_1\fs x_2\fs\cdots\fs x_{3k-1}\]
is an $\X$ forcing chain for a well behaved color change rule $\X$, and define $S$ and $B_S$ as in the proof of Theorem \ref{psdprod}.
We can use the fact that $\X$ is well behaved to conclude that $B_S$ is an $\X$ forcing set of $G[S]$. 
However, it is not clear why $B_S$ will turn $G[S]$ blue in one time step without more information about $\X$.
In particular, even if $\X$ forces only occur on edges (which might not be true), there is still no guarantee that the forcing chain induces a path, and that $G[S]$ is a matching. 
These difficulties will be tackled in the next subsection. 

\subsection{Product throttling for color change rules}\label{sec:genproduct}
The next theorem generalizes some of the implications of Theorem \ref{survey} to abstract color change rules at the cost of some strength. 
In particular, high $\X$ product throttling numbers for sufficiently large graphs are characterized by a finite family of forbidden subgraphs (and this family is analogous to the family $\mathcal G$ above). 
Additionally, these high $\X$ product throttling numbers are realized by $\X$ forcing sets that perform all their forces in one time step. 
The costs of the theorem are the restriction to high product throttling numbers where we already know that the initial blue set must have at least $k\sqrt{3n}$.

An $\X$ force $x\fs y$ is \emph{independent} of another vertex $v$ if knowing the color of $v$ is not required for determining whether $x$ can force $y$. 
For example, in standard zero forcing, $x\fs y$ is independent of all $v\notin N[x]$. 
We say a forcing chain $x_0\fs x_1\fs \cdots\fs x_r$ is \emph{internally independent} if we have $x_j\fs x_{j+1}$ is  independent of  $x_i$ for all $i\neq j-1,j,j+1$.

An internally independent chain will let us identify forces that can happen simultaneously for an appropriate set of blue vertices.
Therefore, if there exists a large internally independent chain, then it should be relatively easy to find forcing sets with controlled throttling behavior.

\begin{defn}\label{defn:lss}
Let $\X$ be a color change rule.
\begin{itemize}
\item We say $\X$ is \emph{local} if for all $G$ and any $v,w\in V(G)$, we have that $v\overset{\X}{\fs} w$ is independent of $V(G)\setminus N[v]$.

\item We say a  color change rule $\X$ is  \emph{symmetric} if $v\overset{\X}{\fs} w$ is valid given blue set $B$ for some $w\in (V(G)\setminus B)\cap N[v]$ implies  that $v\overset{\X}{\fs}  w'$ is valid given $B$ for all $w'\in (V(G)\setminus B)\cap N[v]$.

\item We say a color change rule $\X$ is \emph{simple} if whenever $u\overset{\X}{\fs}v$ and $x\overset{\X}{\fs} y$ are valid given $B$ (with $v\neq y$), then $u\overset{\X}{\fs}v$ and $x\overset{\X}{\fs} y$ can be performed simultaneously. 

\item We say that a color change rule $\X$ is an \emph{infection rule} (or an infectious color change rule), if $u\overset{\X}{\fs} v$ is valid at time step $t$ implies that $u$ is blue at time $t-1$.
\end{itemize}
\end{defn}

\begin{obs}
Suppose $\X$ is a color change rule.
If $\X$ is local, then $v\fs w$ for non-adjacent $v$ and $w$ is not valid (and impossible). 
Furthermore, if $v\fs w$ is possible for non-adjacent $v$ and $w$, then $\X$ is not local. 
\end{obs}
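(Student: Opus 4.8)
The plan is to notice first that the two sentences of the observation are contrapositives of one another: the second assertion says precisely that a valid force between non-adjacent vertices witnesses a failure of locality, which is exactly the negation of the implication stated in the first sentence. Hence it suffices to prove the first sentence, and the second follows immediately.

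To prove the first sentence I would argue by contradiction. Suppose $\X$ is local and that, for some graph $G$, some blue set $B$, and some pair of non-adjacent vertices $v,w\in V(G)$, the force $v\fs w$ is valid given $B$. The crucial point is that a force is, by definition, an operation that recolors a \emph{white} vertex blue; in particular $v\fs w$ can only be valid when $w$ is currently white, so whether $v$ can force $w$ genuinely depends on the color of $w$. Since $v$ and $w$ are non-adjacent we have $w\notin N[v]$, that is, $w\in V(G)\setminus N[v]$. But locality (Definition \ref{defn:lss}) asserts exactly that the validity of $v\fs w$ is independent of every vertex in $V(G)\setminus N[v]$, and in particular of $w$ itself, contradicting the previous sentence.

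A slightly more hands-on version of the same step, which avoids reasoning informally about what the validity of a force ``depends on,'' is to compare two colorings. Starting from the blue set $B$ (under which $v\fs w$ is valid by assumption), recolor $w$ blue to obtain the blue set $B\cup\{w\}$. These two colorings agree on all of $N[v]$, since they differ only at $w\notin N[v]$, so by locality the force $v\fs w$ is valid under $B\cup\{w\}$ as well. This is absurd, because $w$ is already blue under $B\cup\{w\}$ and therefore cannot be forced. Either route shows that no valid force between non-adjacent vertices can exist when $\X$ is local.

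I do not expect any genuine obstacle here; the content is entirely a matter of unwinding the definition of locality together with the notion of independence introduced just before it. The only point requiring a little care is the one flagged above, namely making explicit that the validity of a force presupposes that its target is white, so that the target's color is among the data on which validity can depend. Once this is granted, applying locality to the non-neighbor $w$ delivers the contradiction at once.
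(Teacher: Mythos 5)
Your proof is correct; the paper states this result as an Observation with no accompanying proof, and your argument---in particular the concrete version comparing the blue sets $B$ and $B\cup\{w\}$, which agree on $N[v]$ since $w\notin N[v]$, so locality would force $v\fs w$ to remain valid even though $w$ is already blue---is precisely the natural unwinding of the definitions of independence and locality that the authors implicitly intend. Your reduction of the second sentence to the first via contraposition (reading ``possible'' as ``valid for some blue set'') is also accurate, so nothing further is needed.
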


Essentially, a color change rule $\X$ is local and symmetric  when a vertex $v$ can force a white neighbor if and only if $v$ can force any of its white neighbors.
Standard zero forcing, skew forcing, and  $k$-forcing are local, symmetric, and simple color change rules.
On the other hand, PSD zero forcing is not even local. 
To see this, let $C_4$ have vertices labeled $1,2,3,4$ clockwise with $B=\{1,3\}$ and $B'=\{1\}$. 
In this case, $1\overset{+}\fs 2$ is valid for blue set $B$, but not $B'$. 
In particular, the validity of $1\overset{+}\fs 2$ depends on the color of vertex $3$ which is not a neighbor of $1$. 

This highlights the shortcomings of the PSD color change rule. 
The PSD color change rule requires that white vertices which are simultaneously forced by the same blue vertex $v$ are not in the same component (which is not a property determined by the neighborhood of $v$).

Any example we have of a color change rule that is local but not symmetric is contrived, and does not arise from a natural application (unlike most zero forcing rules). 
For example, we could insist that the vertices of $G$ are ordered, and that a blue vertex $v$ can only force its smallest white neighbor. 
Furthermore, color change rules defined on ordered graphs is beyond the purview of this discussion.

Finally, we want to point out that simple and symmetric do not imply each other. 
To see this consider zero forcing with hopping (also known as the minor monotone floor of zero forcing $\zf$).
In zero forcing with hopping, a blue vertex $v$ without white neighbors such that $v$ has not performed a force may force any white vertex in the graph. 
The zero forcing with hopping color change rule is vacuously symmetric since it follows the standard zero forcing color change rule when vertices are adjacent. 
However, it is possible that an isolated blue vertex has 2 white non-neighbors which cannot be forced simultaneously. 
For work on the throttling of the minor monotone floor of the standard zero forcing rule, see \cite{JCThrot}. For throttling where hopping is the only allowed color change rule, see \cite{CP22}.

Notice that the minor monotone floor of standard zero forcing is not a simple color change rule. 
In particular, if $G=\overline{K_3}$ with vertex set $\{1,2,3\}$ and blue set $B=\{1\}$, then $1\fs 2$ and $1\fs 3$ are both valid in the first time step. 
However, these two forces cannot be performed simultaneously. 

We can always find a set of forces which minimizes the time step at which a particular vertex turns blue. 
However,
under certain conditions, we can find a single set of forces which minimizes the time step at which each vertex turns blue. 
To this end, consider the following definition.
We say a set of forces $\calf_*$ is \emph{uniformly as fast as possible} for a blue set $B$ if for all vertices $v\in V(G)\setminus B$ and all sets of forces $\calf$ we have that $v\in \calf_*\up i\cap \calf\up j$ implies that $i\leq j$.

\begin{lem}\label{lem:uniform}
Let $\X$ be simple and nearly well behaved.  If $B$ is an $\X$ forcing set, then there exists a set of forces which is uniformly as fast as possible.
\end{lem}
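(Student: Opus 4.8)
The plan is to single out a ``maximally parallel'' set of forces in which every force that can possibly occur occurs as early as possible, and then to verify that this set both is legitimate and dominates every other set of forces. Concretely, set $C_0=B$ and, having defined $C_{t-1}$, let $C_t$ be $C_{t-1}$ together with every white vertex $w$ for which there is some $u\in C_{t-1}$ with $u\fs w$ valid given $C_{t-1}$. For each $w\notin B$ let $t^*(w)$ be the first index with $w\in C_{t^*(w)}$ (well defined once we know, as the domination step below shows, that every vertex eventually enters the closure), fix one witnessing force $(u_w\fs w)$ valid given $C_{t^*(w)-1}$, and let $\calf_*$ be the collection of all these chosen forces. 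I want to show that $\calf_*\upc t=C_t$ for every $t$ and that $v\in\calf\upc t$ for some $\calf$ exactly when $v\in C_t$; the latter is precisely the assertion that $\calf_*$ is uniformly as fast as possible.

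The first ingredient is a monotonicity lemma extracted from simplicity: if $u\fs w$ is valid given a set $A$ and $A'$ is obtained from $A$ by a sequence of valid single forces none of which targets $w$, then $u\fs w$ is valid given $A'$. This follows by induction on the length of the reaching sequence, since at each step we add a single forceable vertex $z\neq w$, and as both $u\fs w$ and the force producing $z$ are valid given the current set, simplicity lets them be performed simultaneously, so $u\fs w$ survives the addition of $z$. The same bootstrapping upgrades the pairwise statement in the definition of simple to whole rounds: all chosen forces $(u_w\fs w)$ with $t^*(w)=t$ are valid given $C_{t-1}$ and have distinct targets, so they can be performed one after another without any of them becoming invalid. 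Feeding this into an induction on $t$ shows that, started from $B$, the set $\calf_*$ turns exactly $C_t\setminus C_{t-1}$ blue during time step $t$; hence $\calf_*\upc t=C_t$, and in particular $\calf_*$ is a genuine set of $\X$ forces.

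It remains to prove domination: for every set of forces $\calf$ and every $t$ we have $\calf\upc t\subseteq C_t$. I would argue by induction on $t$, the base case $\calf\upc0=B=C_0$ being immediate. For the inductive step, take $w\in\calf\up t$, so some $(u\fs w)\in\calf$ is valid given $\calf\upc{t-1}$, which by the inductive hypothesis lies in $C_{t-1}$; if $w\in C_{t-1}$ we are done, so assume $w$ is white with respect to $C_{t-1}$. Both $\calf\upc{t-1}$ and $C_{t-1}$ are reachable from $B$ by valid single forces, and $\calf\upc{t-1}\subseteq C_{t-1}$, so a short confluence argument (again built from the monotonicity lemma) shows that $C_{t-1}$ is reachable from $\calf\upc{t-1}$, and since $w\notin C_{t-1}$ none of these forces targets $w$. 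The monotonicity lemma then promotes the validity of $u\fs w$ from $\calf\upc{t-1}$ to $C_{t-1}$, so $w\in C_t$, completing the induction. Combining domination with the hypothesis that $B$ is an $\X$ forcing set gives $C_t=V(G)$ for large $t$, and combining it with $\calf_*\upc t=C_t$ yields that whenever $v\in\calf_*\up i\cap\calf\up j$ we have $i=t^*(v)\le j$, which is exactly the desired conclusion.

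The main obstacle is the domination step, and within it the passage from forceability of $w$ out of the smaller reachable set $\calf\upc{t-1}$ to forceability out of the larger greedy closure $C_{t-1}$. Simplicity is precisely what is needed: it guarantees that enlarging the blue set along a legitimate forcing process never destroys an available force, a property that fails for non-simple rules such as PSD forcing. I expect the confluence claim, namely that if two colorings are each reachable from $B$ then one is reachable from the other when they are nested, to require the most care, since it compares two forcing processes with incomparable intermediate colorings and must be reduced, via the monotonicity lemma and a nested induction on derivation length, to the pairwise hypothesis in the definition of a simple color change rule.
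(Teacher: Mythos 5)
Your proof is correct in substance, but it takes a genuinely different route from the paper's. The paper argues by an extremal exchange: it picks a set of forces $\calf$ maximizing the number $m(\calf)$ of vertices whose blue-time is minimized, supposes some vertex $v$ is forced strictly later in $\calf$ than in some competitor $\calf_v$, locates the first time $j$ of divergence, and swaps the single force $x\fs v$ for $u\fs v$ (the force used by $\calf_v$) to build a new chronological list with $m(\calf_*)>m(\calf)$, a contradiction. You instead build the ``full parallel closure'' $C_t$ greedily, realize it by a set of forces $\calf_*$ with $\calf_*\upc{t}=C_t$, and prove the domination $\calf\upc{t}\subseteq C_t$ for \emph{every} set of forces $\calf$. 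Your approach is constructive and proves something slightly stronger --- it exhibits a canonical fastest process and identifies its layers exactly --- but it pays for this with the confluence/replay machinery (the nested induction showing that nested reachable colorings are reachable from one another), which is standard rewriting-theoretic work but is the heaviest part of your argument; the paper's exchange avoids it entirely by comparing against only one competitor at the first point of divergence, so that monotonicity is needed only once, for a single force. Both proofs ultimately rest on the same interpretive reading of ``simple,'' namely that performing other valid forces with distinct targets never invalidates a given valid force: you isolate this explicitly as your monotonicity lemma, which is a clarity improvement, whereas the paper invokes it implicitly (and at one point attributes it to well-behavedness rather than simplicity). Given that the paper itself operates at this level of informality, your argument is at least as rigorous; just be aware that the pairwise-to-whole-round upgrade and the confluence claim are the two places where a referee would ask you to write out the inductions in full.
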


\begin{proof}
For a set of forces $\calf$, let $m(\calf)$ be the number of vertices $v$ for which $\calf$ minimizes the time step at which $v$ turns blue. 
Let $\calf$ be the set of forces for which $m(\calf)$ is maximized. 
For the sake of contradiction, suppose $m(\calf)<n-|B|$. 
This implies that there exists vertex $v$ and set of forces $\calf_v$ such that $\calf\up i\cap \calf_v\up j$ with $i>j$ which minimizes $j$. 
In particular, $j$ is the first time step when $\calf_v\upc j\not \subseteq \calf\upc j$. 

Let $u$ be the vertex such that $u\fs v\in \calf_v$.  
Furthermore, let $x$ be the vertex such that $x\fs v\in \calf.$
Since $\calf_v\upc k \subseteq \calf\upc k$ for all $k < j$ and $\X$ is well behaved, we have that $u \fs v$ is valid at time $k$ given $\calf$.
We claim that $\calf_*=\calf\cup\{u\fs v\}\setminus \{x\fs v\}$ is a set of forces with $m(\calf_*)> m(\calf).$

First we will show that $\calf_*$ is a set of forces by constructing a chronological list of forces from which $\calf_*$ can be derived. 
First, greedily order forces in $\calf\cap \calf_*$ until $\calf\up {j-1}$ is blue. 
Next, append $u\fs v$. 
Now, greedily add forces in $\calf\cap \calf_*\setminus\{ x\fs v\}$ until $\calf\up i$ is blue. 
Since $v$ is already blue, the force $x\fs v$ is not necessary.
Finally, use the remaining forces to finish coloring $G$ blue.\footnote{A similar method of transitioning from one set of forces to another is used in \cite{AKWY22}, but proven in less generality and with significantly different notation.} 

Next we will show that $m(\calf_*)>m(\calf).$
Clearly, $\calf_*\up k=\calf\up k$ for $k<j$. 
Since $\X$ is simple, $\calf_*\up j=\calf \up j\cup \{v\}$.
Furthermore, $\calf_*\up k\subseteq \calf\up k $ since $\X$ is simple for $k> j$. 
Therefore, $m(\calf_*)>m(\calf).$
\end{proof}

The next lemma takes a set of forces that is uniformly as fast as possible, and shows that its forcing chains are internally independent and induce paths. 

\begin{lem}\label{lem:internallyindependent}
Let $\X$ be a well behaved, local, symmetric, and simple color change rule.
If $B$ is an $\X$ forcing set of $G$, then there exists $\calf$ such that every forcing chain in $\calf$ is an induced path in $G$. 
Furthermore, every chain in $\calf$ is internally independent. 
\end{lem}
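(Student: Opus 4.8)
The plan is to start from a set of forces that is uniformly as fast as possible, which exists by Lemma 4.10 since $\X$ is simple and well behaved (hence nearly well behaved). Let $\calf_*$ be such a set of forces for $B$. I would first argue that the forcing chains of $\calf_*$ are internally independent, and then use locality together with the uniform-speed property to rule out ``chords'' in each chain, concluding that each chain induces a path.

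First I would establish internal independence. Consider a forcing chain $x_0 \overset{\X}{\fs} x_1 \overset{\X}{\fs} \cdots \overset{\X}{\fs} x_r$ in $\calf_*$. By definition, I must show that $x_j \overset{\X}{\fs} x_{j+1}$ is independent of $x_i$ for every $i \neq j-1, j, j+1$. Since $\X$ is local, the force $x_j \overset{\X}{\fs} x_{j+1}$ depends only on the colors of vertices in $N[x_j]$; so it suffices to show that the only chain-vertices in $N[x_j]$ are $x_{j-1}, x_j, x_{j+1}$. This is exactly the claim that the chain induces a path (no chords), so the two assertions of the lemma are really the same underlying fact, and I would prove the path claim and then read off internal independence as a corollary of locality.

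The heart of the argument is therefore to show that $\calf_*$ can be chosen so that each chain is chordless. Suppose for contradiction that some chain $x_0 \fs \cdots \fs x_r$ has a chord, i.e.\ $x_a$ is adjacent to $x_b$ with $b > a+1$, and take such a pair minimizing $b$. The key observation is that because $\X$ is local, the force producing $x_{a+1}$ depends only on $N[x_a]$, which now contains $x_b$ as a blue vertex at the later time step; I would use symmetry to argue that once $x_a$ can force some white neighbor, it can force \emph{any} white neighbor, so $x_a$ should have been able to force $x_b$ at the earlier time $a+1$ rather than waiting until time $b$. Concretely, I would show that replacing the force $(x_{b-1} \fs x_b)$ by $(x_a \fs x_b)$, and re-deriving a valid chronological list (as in the proof of Lemma 4.10, shortcutting the chain), yields a set of forces turning $x_b$ blue strictly earlier. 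This contradicts the assumption that $\calf_*$ is uniformly as fast as possible. The same replacement simultaneously shortens the chain, and iterating removes all chords.

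The main obstacle I anticipate is making the ``force $x_b$ earlier'' step fully rigorous: I must verify that $x_a \fs x_b$ is genuinely valid at time step $a+1$, not merely plausible. Here symmetry is essential (it guarantees $x_a$ forces $x_b$ once it forces $x_{a+1}$), but I also need locality to ensure that the blue/white status of $x_b$'s other neighbors outside $N[x_a]$ is irrelevant, and I must confirm that the rerouted list remains a legitimate set of $\X$ forces — which again leans on $\X$ being simple (so the shortcut force coexists with the others in its time step) and well behaved (so discarded forces and their performers are not needed later, via conditions 3 and 4 of Definition 2.1). Assembling the new chronological list cleanly, while tracking that no vertex is delayed, is the delicate bookkeeping, but it mirrors the construction already carried out in Lemma 4.10 and can be cited as the template.
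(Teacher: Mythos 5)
Your proposal is correct and takes essentially the same route as the paper's proof: both start from a uniformly-as-fast-as-possible set of forces supplied by Lemma \ref{lem:uniform}, rule out chords in each forcing chain by using symmetry to show that a chord would allow the later chain vertex to be forced strictly earlier (contradicting uniform fastness), and then invoke locality to convert chordlessness into internal independence. The only differences are organizational --- the paper runs a strong induction along the chain where you take a chord minimizing its far endpoint, and your explicit rebuilding of the chronological list (modeled on Lemma \ref{lem:uniform}) spells out bookkeeping that the paper leaves implicit.
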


Note that $\calf$ can also be taken to be uniformly as fast as possible.

\begin{proof}
Let $G$ be a graph with $\X$ forcing set $B$. 
By Lemma \ref{lem:uniform} there exists a set of forces $\calf$ which is uniformly as fast as possible.  
Let $x_0\fs x_1\fs \cdots \fs x_r$ be a $\X$ forcing chain of $B$ in $\calf$. 
Without loss of generality, there is a time $c$ such that $x_0\fs x_1$ is valid when the set of blue vertices in $G$ is  $\calf\upc c$.
We will show by induction on $j$ that $x_j\fs x_{j+1}$ is independent from the colors of $x_i$ for $i \neq j-1,j,j+1$.

For the sake of contradiction, suppose that $x_0$ and  $x_i$ are adjacent for some $i\geq 2$. 
Since $x_0\fs x_1$ is valid given blue set $\calf\upc c$ and $\X$ is symmetric, it follows that $x_0\fs x_i$ is valid at time $c$. 
This contradicts the fact that $\calf$ is uniformly as fast as possible. 
Therefore, $x_0$ is not adjacent to $x_i$ for any $i\geq 2$.
Furthermore, since $\X$ is local, the validity of $x_0\fs x_1$ does not depend on the non-neighbors of $x_0$. 
In particular, $x_0\fs x_1$ does not depend on the color of $x_i$ for $i\geq 2$.

As the strong induction hypothesis, assume that for all $k<j$ we have $x_k\fs x_{k+1}$ is independent of $x_i$ for all $i\neq k-1,k,k+1$ and $x_k$ is not adjacent to $x_i$ for all $i \neq k-1,k,k+1$. 

We now prove the claim for index $j$. 
By the induction hypothesis, $x_j$ is not adjacent to $x_i$ for all $i <j-1$. 
For the sake of contradiction, suppose that $x_j$ is adjacent to $x_i$ for some $i\geq j+2$. 
Since $x_j\fs x_{j+1}$ is valid when $\calf^{[j]}$ is the blue set and $\X$ is symmetric, it follows that  $x_j\fs x_i$ is valid when $\calf^{[j]}$ is the blue set of $G$. 
This contradicts the fact that $\calf$ is uniformly as fast as possible.

Notice that the only neighbors of $x_i$ are $x_{i-1}$ and $x_{i+1}$ for $0\leq i \leq r$ (where the index makes sense). 
This implies that $x_0\neq x_r$ if $r\geq 3$. 
Finally, if $x_2 =x_0$, then the chain still induces a path (this needs to be considered for skew forcing).
\end{proof}

Recall that in Subsection \ref{sec:PSDproduct}, we were able to show that forcing chains of a PSD zero forcing set induce paths and are internally independent. 
The work done in Lemma \ref{lem:internallyindependent} draws the same conclusions for abstract local, symmetric, and simple color change rules. 
Interestingly, the PSD color change rule is not local, suggesting that the property of being local as defined in Definition \ref{defn:lss} is too narrow. 
Regardless, Definition \ref{defn:lss} is still worth considering since it captures standard zero forcing, skew forcing, and $k$-forcing for all $k\geq 1$.

\begin{thm}\label{genprod}
Suppose $\X$ is a well behaved, local, symmetric, simple, and infectious color change rule. 
For a positive integer $k$, let $\mathcal G_{\X,k}$ be the set of graphs $G$ with at most $2k$ vertices such that there exists an $\X$ forcing set $B$ and set of  forces $\calf$ with $\ptx(G;B)=\ptx(G;\calf)=1$ and $|\calf\up 1|=k.$

\begin{enumerate}
\item If $G$ contains a graph in $\mathcal G_{\X,k}$, then $|V(G)|-k\geq \throt^*_{\X}(G)$.

\item Suppose $G$ is a non-empty graph on $n$ vertices that does not contain a graph in $\mathcal G_{\X,k}$ as an induced subgraph. 
If $1<k<(n/12)^{1/3}$, then $\throt^*_{\X}(G)=k_{\X}(G,1)>|V(G)|-k.$ 

\end{enumerate}
\end{thm}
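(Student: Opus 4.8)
The plan is to adapt the forbidden-subgraph proof of Theorem \ref{psdprod} to the abstract setting, using Lemmas \ref{lem:uniform} and \ref{lem:internallyindependent} to recover, for any well behaved, local, symmetric, and simple rule, the structural facts that came for free from the explicit PSD rule: that a set of forces can be chosen so every forcing chain is an induced path and is internally independent. Part~1 would be essentially identical to Part~1 of Theorem \ref{psdprod}. If $G[S]\in\mathcal G_{\X,k}$ on a vertex set $S\subseteq V(G)$, then $S$ carries an $\X$ forcing set $B_S$ with $\ptx(G[S];B_S)=1$ and $|S|-|B_S|=k$; since $\X$ is well behaved, the set $B_S\cup(V(G)\setminus S)$ witnesses $\throt^*_{\X}(G)\le |V(G)|-k$.

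For Part~2, I would fix an $\X$ forcing set $B$ of size $b$ realizing $\throt^*_{\X}(G)$ with propagation time $t$, and by Lemmas \ref{lem:uniform} and \ref{lem:internallyindependent} choose a set of forces $\calf$ that is uniformly as fast as possible and whose chains are internally independent induced paths. The heart of the argument is two claims, each proved by contradiction against the hypothesis that $G$ omits every member of $\mathcal G_{\X,k}$. First, every chain $x_0\fs\cdots\fs x_r$ has at most $3k-2$ vertices: otherwise, taking $S=\{x_i: i\not\equiv 0 \bmod 3\}$ from the first $3k-1$ vertices gives $|S|=2k$, the induced-path property makes $G[S]$ a matching on $k$ edges, internal independence guarantees the forces $x_{3i+1}\fs x_{3i+2}$ survive in $G[S]$, so $B_S=\{x_i: i\equiv 1 \bmod 3\}$ forces $G[S]$ in one step and $G[S]\in\mathcal G_{\X,k}$. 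Second, each time step performs at most $k-1$ forces: otherwise a size-$k$ subset $F_1$ of $\calf\up i$ together with its forcers $U_1$ (with $|U_1|\le k$ by the well behaved cardinality condition) induces a graph on at most $2k$ vertices lying in $\mathcal G_{\X,k}$.

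The counting phase then mirrors the PSD proof. The bound on chain length controls the depth of the forcing trees rooted at $B$, while the bound on forces per time step controls their level sizes, giving
\[ t(k-1)\ge n-b \quad\text{and}\quad b(3k-3)(k-1)\ge n-b, \]
and hence $tb(3k-3)(k-1)^2\ge (n-b)^2$. Using $tb\le n-1$ and assuming $b\le n/2$ for contradiction yields $n/4 < (3k-3)(k-1)^2 < 3k^3$, which is incompatible with $k<(n/12)^{1/3}$. Therefore $b\ge n/2$, and then $tb\le n-1$ forces $t=1$. Finally, since each time step forces at most $k-1$ vertices, any one-step forcing set has size at least $n-k+1>|V(G)|-k$, so $\throt^*_{\X}(G)=k_{\X}(G,1)>|V(G)|-k$.

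The main obstacle is the transfer of forces to the induced subgraph $G[S]$ in the two claims. Unlike the PSD rule, where a matching is visibly forced in one step, restricting to $S$ deletes the off-chain neighbors of each $x_i$ as well as the vertices $x_{3i}$, shrinking closed neighborhoods; I must argue that the surviving forces $x_{3i+1}\fs x_{3i+2}$ remain valid in $G[S]$ and, lying on disjoint edges, can be applied simultaneously. This is exactly where locality (validity depends only on $N[x_{3i+1}]$), symmetry and the infection property (so that a blue vertex whose unique surviving white neighbor is $x_{3i+2}$ can still force it), and simplicity (to perform all $k$ forces in a single step) must be combined. A secondary technical point, needed for the level-size bound, is to confirm that for a set of forces that is uniformly as fast as possible, each vertex at depth $d$ in its forcing tree becomes blue exactly at time $d$, so that tree levels coincide with time steps.
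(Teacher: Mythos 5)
Your proposal follows the paper's proof essentially step for step: Part 1 is the same well-behavedness argument, and Part 2 invokes Lemmas \ref{lem:uniform} and \ref{lem:internallyindependent} to obtain a uniformly-as-fast-as-possible set of forces with internally independent induced-path chains, proves the same two claims (chains have at most $3k-2$ vertices via the $S=\{x_i : i\not\equiv 0 \bmod 3\}$ construction, and at most $k-1$ forces per time step via $F_1\cup U_1$), and finishes with the identical counting argument $t(k-1)\geq n-b$, $b(3k-3)(k-1)\geq n-b$, $tb\leq n-1$, forcing $b\geq n/2$ and $t=1$. The differences are only in level of detail — you make the numeric contradiction with $k<(n/12)^{1/3}$ and the claim-2 transfer argument explicit (the paper states the latter only in the PSD case) — so this is the paper's approach, correctly executed.
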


\begin{proof}[Proof of 1.] Suppose $G$ contains a graph in $\mathcal G_{\X,k}$ as an induced subgraph on vertex set $S\subseteq V(G)$. 
Since $G[S]\in \mathcal G_{\X,k}$, there exists an $\X$ forcing set $B_S$ such that $\ptx(G[S];B_S)=1$ and $|S|-|B_S| = k$. 
Therefore, 
$\throt^*_{\X}(G[S])\leq |S|-k$. 
Since $\X$ is well behaved, this implies that $\throt^*_{\X}(G)\leq |V(G)|-k.$
\end{proof}

\begin{proof}[Proof of 2.] Suppose that $B$ is an $\X$ forcing set of $G$ with propagation time $t$ that achieves the $\X$ product throttling number of $G$.
Let $|B|=b$.
By Lemma \ref{lem:internallyindependent}, there exists a set of forces $\calf$ which realizes the propagation time, is uniformly as fast as possible, and has internally independent chains.

Since $G$ does not contain a graph in $\mathcal G_{\X,k}$, all  forcing chains of $B$ given $\calf$ have at most $3k-2$ vertices. 
To prove this claim, suppose that 
\[x_1\fs x_2\fs\cdots\fs x_{3k-1}\] is a $\X$ forcing chain of $B$ given $\calf$.
Let $S=\{x_i: i\not\equiv 0 \mod 3\}$ and notice that $G[S]$ is a graph on $2k$ vertices such that $B_S=\{x_i: i\equiv 1\mod 3\}$ is an $\X$ forcing set with $\ptx(G[S],B_S)=1$ and $|S|-|B_S|=k$. 
In particular, $\ptx(G[S],B_S)=1$ since chains in $\X$ are internally independent. 
This contradicts the fact that $G$ does not contain a graph in $\mathcal G_{\X,k}.$

Furthermore, since $G$ does not contain a graph in $\mathcal G_{\X,k}$, each time step of forcing has at most $k-1$ forces.

Therefore, \[t(k-1)\geq n-b\quad \text{and}\quad b(3k-3)(k-1) \geq n-b.\]
Putting the two bounds together gives
\[tb(3k-2)(k-1)^2\geq (n-b)^2.\]

For the sake of contradiction, suppose that $b\leq n/2$
Recall that $tb\leq n-1$, so we have 
\begin{align*}
    n(3k-3)(k-1)^2&\geq n^2/4.
\end{align*}
This is also a contradiction for large enough $n$. 
Therefore, $b\geq n/2$. 

Since $b\geq n/2$, it must follow that $t=1$. 
Finally, $\throt^*_{+}(G)=k_{+}(G,1)>|V(G)|-k$ because at most $k-1$ vertices can be forced in one time step.
\end{proof}

\section{Co-local and co-symmetric color change rules}\label{sec:colocal}

In \emph{$r$-bootstrap percolation}, we start with a graph $G$ and an initial set of infected vertices $B = B_0 \subseteq V(G)$. For each $t = 1, 2, 3,\ldots$, we let
\[
B_t = B_{t-1} \cup \{v \in V(G) : |N(v) \cap B_{t-1}| \geq r\}
\]
be the set of vertices that become infected at time step $t$.
If there exists a $k \geq 0$ such that $\bigcup_{t=0}^k B_t = V(G)$, we say that $B$ is a \emph{$r$-percolating set} of $G$ (or $B$ \emph{$r$-percolates} $G$). 
The smallest such $k$ is the \emph{$r$-percolation time} of $B$ in $G$, and is denoted by $\ptime$. 
We define the \emph{throttling number for $r$-bootstrap percolation} of $G$ as
\[
\thperc(G) := \text{min}\{|B| + \ptime : B \text{ $r$-percolates } G \}.\]

Notice that $r$-bootstrap percolation can be phrased as a color change rule, where a blue vertex $v$ forces a white vertex $w$ to turn blue if $v\in N(w)$ and $|N(w)\cap B|\geq r.$
With this description, $r$-bootstrap percolation is a simple and nearly well behaved color change rule. 
However, unlike zero forcing color change rules, the validity $u\fs v$ for the $r$-bootstrap percolation color change rule is determined by the neighborhood of $v$ and not the neighborhood of $u$.
This motivates the following definitions.

\begin{defn}\label{defn:cols}
Let $\X$ be a color change rule.
\begin{itemize}
\item We say $\X$ is \emph{co-local} if for all $G$ and any $v,w\in V(G)$, we have that $v\overset{\X}{\fs} w$ is independent of $V(G)\setminus N[w]$.

\item We say a color change rule $\X$ is  \emph{co-symmetric} if $v\overset{\X}{\fs} w$ is valid given blue set $B$ for some $w\in B\cap N[w]$ implies  that $v'\overset{\X}{\fs}  w$ is valid given $B$ for all $v'\in B\cap N[w]$.

\end{itemize}
\end{defn}

Let $\ell(\mathcal F)$ denote the length of the longest forcing chain in $\mathcal F$.
In general $\ell(\mathcal F)\leq \pt_{\Z}(G;\calf)$, even for nearly well behaved, local, symmetric, and simple color change rules. 
Figure \ref{fig:chainVSpt} shows that $\ell(\calf)< \pt_{\Z}(G,B)$ for some $G$ and $B$ with the standard zero forcing color change rule. 

\begin{prop}\label{prop:maxchain}
Let $\X$ be a nearly well behaved, co-local,  co-symmetric, and simple infection rule.
If $B$ is an $\X$ forcing set of $G$, then 
\[\pt_{\X}(G;B) = \max\{\ell(\mathcal F): \mathcal F\text{ is a uniformly as fast as possible set of forces of $B$}\}.\]
\end{prop}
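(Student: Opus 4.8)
The plan is to work with the \emph{earliest blue time} function. By Lemma~\ref{lem:uniform}, since $\X$ is simple and nearly well behaved, $B$ admits a set of forces that is uniformly as fast as possible; for any such $\calf_*$ and any vertex $v$, the time step at which $v$ turns blue is the minimum over all sets of forces. I write $\tau(v)$ for this common value (so $\tau(v)=0$ exactly when $v\in B$) and set $L_s=\{x:\tau(x)\le s\}=\calf_*\upc{s}$. Because a uniformly as fast as possible set of forces simultaneously minimizes every blue time, one checks that $\ptx(G;\calf_*)=\max_{v}\tau(v)=\ptx(G;B)$, and that this value, call it $t$, is common to every uniformly as fast as possible set of forces.

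For the inequality $\max_{\calf}\ell(\calf)\le \ptx(G;B)$, where the maximum ranges over uniformly as fast as possible $\calf$, I will use that $\X$ is an infection rule. Along any forcing chain $v_1\fs v_2\fs\cdots\fs v_k$ of $\calf$, each force $v_i\fs v_{i+1}$ requires $v_i$ to be blue one step before $v_{i+1}$ turns blue, so the blue times strictly increase along the chain. Hence a chain with $k$ vertices turns its last vertex blue no earlier than time $k-1$, giving $\ell(\calf)\le \ptx(G;\calf)$, which equals $t$ for uniformly as fast as possible $\calf$.

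The substance is the reverse inequality: producing a uniformly as fast as possible set of forces that contains a chain of length $t$. The key claim is a backtracking statement: for every $v\notin B$ there is a neighbor $u$ with $\tau(u)=\tau(v)-1$ such that $u\fs v$ is valid given $L_{\tau(v)-1}$. To see that some neighbor sits at level $\tau(v)-1$, suppose not; then $N[v]\cap L_{\tau(v)-1}=N[v]\cap L_{\tau(v)-2}$ (the coloring of $N[v]$ is unchanged from step $\tau(v)-2$ to step $\tau(v)-1$), so co-locality forces $v$ to already be forceable given $L_{\tau(v)-2}$, contradicting the minimality of $\tau(v)$. Co-symmetry then upgrades this: since $v$ is forceable given $L_{\tau(v)-1}$ by some blue neighbor, every blue neighbor of $v$ is a valid forcer, in particular the one at level $\tau(v)-1$. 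Starting from a vertex $w_t$ with $\tau(w_t)=t$ and iterating produces a chain $w_0\fs w_1\fs\cdots\fs w_t$ with $\tau(w_i)=i$.

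Finally I assemble these pieces into a genuine set of forces. For each $v\notin B$ I select a forcer $\phi(v)$ at level $\tau(v)-1$ as above, taking $\phi(w_i)=w_{i-1}$ along the chain, and set $\calf=\{\phi(v)\fs v:v\notin B\}$. Processing $\calf$ one level at a time, the infection property guarantees that no force into a level-$s$ vertex can fire before step $s$ (its forcer is still white at earlier steps), while simplicity lets all level-$s$ forces be carried out in the single step $s$; an induction then yields $\calf\up{s}=L_s\setminus L_{s-1}$, so $\calf$ has blue-time function exactly $\tau$ and is uniformly as fast as possible. Since $\calf$ contains the chain $w_0\fs\cdots\fs w_t$, we obtain $\ell(\calf)\ge t$, completing the equality. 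I expect the main obstacle to be this assembly step, namely verifying that the chosen pairs $\{\phi(v)\fs v\}$ genuinely form a valid set of forces realizing $\tau$; this is precisely where simplicity (to fire a whole level simultaneously) and the infection property (to forbid premature forces) are needed, while co-locality and co-symmetry are what make the backtracking claim go through cleanly.
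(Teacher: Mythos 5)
Your proof is correct, and it runs on the same engine as the paper's own argument: co-locality is used to show that every $v\notin B$ has a neighbor one level down, co-symmetry to reroute the force into $v$ onto that neighbor, and the infection property both to anchor chains in $B$ and to get the upper bound $\ell(\calf)\le \ptx(G;\calf)$. The difference is in the assembly. The paper stays inside the family of uniformly-as-fast-as-possible sets of forces: it picks one maximizing the number of ``consecutive'' forces (forcer forced exactly one step before the forcee) and runs an exchange argument to show that \emph{every} force is consecutive, after which any maximal chain ending in the last level automatically has length $\ptx(G;B)$. You instead discard the ambient set of forces and rebuild one from scratch via the selection function $\phi$, verifying level by level that $\calf=\{\phi(v)\fs v : v\notin B\}$ is a valid set of forces realizing $\tau$; this avoids the extremal choice and makes the witness chain explicit, at the cost of having to re-verify validity and uniform speed. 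One boundary case in your backtracking claim needs a patch: when $\tau(v)=1$ there is no step $\tau(v)-2$, so ``forceable given $L_{-1}=\emptyset$'' does not contradict the minimality of $\tau(v)$, since no vertex outside $B$ can ever be blue before step $1$. The fix uses tools you already invoke: because $\X$ is an infection rule, the forcer $u$ of $v$ in $\calf_*$ is blue at time $0$, hence $u\in B$; and co-locality combined with infection forces $u\in N[v]$ (otherwise the validity of $u\fs v$ would be independent of the color of $u$, contradicting that infection requires $u$ to be blue), so $v$ does have a neighbor at level $0$. The same adjacency remark should be made explicit before invoking co-symmetry, whose hypothesis requires the existing valid force into $v$ to come from a blue \emph{neighbor} of $v$ --- though the paper's own proof elides exactly this point, so the gap is inherited rather than introduced.
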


\begin{proof}
Assume that $B$ is an $\X$ forcing set of $G$.
Since $\X$ is nearly well behaved, there exists a set of $\X$ forces that is uniformly as fast as possible.
We claim that we choose a uniformly as fast as possible $\mathcal F_*$ such that if $u\fs v\in \mathcal F_*$ and $v\in \mathcal F_*\up i$, then $u\in \mathcal F_*\up{i-1}$ by the fact that $\X$ is infectious, co-symmetric, and simple. 
That is, whenever there are multiple vertices that can force $v$, then assign responsibility for the force to the most recently forced vertex.
To prove this claim, suppose that $\calf_*$ is uniformly as fast as possible such that $\calf_*$ maximizes the number of forces $x\fs y$ where  $y\in \calf_*\up i$ and $x\in \calf_*\up {i-1}$ for some $i$.
For the sake of contradiction, assume that $u\fs v\in \calf_*$ where $v\in \calf_*\up i$ and $u\in \calf_* \up j$ with $j<i-1$ (since $\X$ is infectious, $u$ must be blue at time step $i$).
If $v$ does not have a neighbor in $\calf_*\up {i-1}$, then $u\fs v$ is valid at time step $i-1$ since $\X$ is co-local and simple.
Therefore, $v$ must have a neighbor in $u'\in \calf_*\up {i-1}$. 
Since $\X$ is co-symmetric and simple, $u'\fs v$ is valid at time step $i$. 
Thus, $(\calf_* \cup \{u'\fs v\} )\setminus \{u\fs v\}$ is a uniformly as fast as possible set of forces for $B$ with more forces $x\fs y$ where  $y\in \calf_*\up i$ and $x\in \calf_*\up {i-1}$ for some $i$.
This contradicts the assumption that $\calf_*$ maximizes the number of forces with the desired property, proving the claim.

If $\mathcal F$ is uniformly as fast as possible, $t=\pt_{\X}(G;B)=\pt_{\X}(G;\mathcal F).$
Furthermore, $\ell(\calf)\leq \pt_{\X}(G;\calf)$ for all set of forces $\calf$.
Since $\X$ is infectious, $x_0\in B$ for all maximal forcing chains \[x_0\fs x_1\fs \cdots\fs x_k\] in $\calf_*$.
If we further assume that the forcing chain is maximum, then $k=t$ since $\calf_*\up t$ is non-empty.
Thus, $\ell(\calf_*)=t=\pt_{\X}(G;\mathcal F)=\pt_{\X}(G;B)$, completing the proof.
\end{proof}
\begin{figure}
\begin{center}
\scalebox{2}{
\begin{tikzpicture}
\draw (0,0) coordinate (c1) node[vtx, blue](x1){}
(1,0) coordinate (c2) node[vtx, fill=white](x2){}
(2,0) coordinate (c3) node[vtx, fill=white](x3){}
(0,1) coordinate (c4) node[vtx, blue](x4){}
(1,1) coordinate (c5) node[vtx, fill=white](x5){}
(2,1) coordinate (c6) node[vtx, fill=white](x6){};

\draw (x1) to (x2) to (x3);
\draw (x4) to (x5) to (x6); 
\draw (x1) to (x5) to (x3);
\draw (x1) to (x4)
(x2) to (x5)
(x3) to (x6);
\end{tikzpicture}
}
\end{center}
\caption{ A graph $G$ with initial blue set $B$. 
Each initial blue vertex in the depicted graph must perform a force.  Furthermore, $\pt_{\Z} (G;B) = 4$.
This shows that any uniformly as fast as possible set of forces $\calf$ for $B$ has $\ell(\calf)< \pt_{\Z}(G,B).$\label{fig:chainVSpt}}
\end{figure}
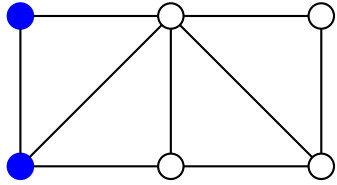

\section{Concluding remarks}

A general idea within zero forcing research is to add constraints to parameters so that the parameter becomes minor monotone. 
An example of this idea is the minor monotone floor of the zero forcing number, denoted $\lfloor \Z\rfloor$ (see \cite{Parameters, JCThrot}). 
The motivation for finding minor monotone analogs of zero forcing parameters is that they allow an application of the Graph Minor Theorem. 
In short, the Graph Minor Theorem states that the set of all graphs ordered by minor containment is a well-quasi-ordering, and in particular, any infinite family $F$ of graphs upwardly closed under the minor relation is characterized by a finite set of minor minimal graphs. 
For example, the set of all graphs containing a cycle $\mathcal C$ is an infinite family of graphs which is characterized by the single minor minimal graph $K_3$ (any cycle contains $K_3$ as a minor). 
An application of the Graph Minor Theorem is useful as it immediately motivates the search for sets of minor minimal graphs that characterize an infinite set.

In contrast, the set of graphs ordered by induced subgraph inclusion does not lend itself nicely to characterizations by finite families of minimal  graphs.
Here again, $\mathcal C$ is instructive because there is no finite family which characterizes $\mathcal C$  under induced subgraph inclusion, since each cycle length must be forbidden individually. 
In particular, the set of cycles is an infinite anti-chain in the induced subgraph partial order on graphs. 

The result of our two theorems shows that some infinite families of graphs characterized by a particular throttling behavior are also characterized by finite families under induced subgraph inclusion. 
This fact is remarkable in part because the there is no \emph{a priori} guarantee that infinite families should have a characterization by a finite family of minimal graphs.


\end{document}